\documentclass[12pt, oneside]{scrartcl}
\usepackage{graphicx} 
\usepackage{amssymb}
\usepackage{amsmath}
\usepackage{hyperref}
\usepackage{amsthm}
\usepackage{cleveref}
\usepackage{tikz}
\newtheorem{theorem}{Theorem}[section]
\newtheorem{lemma}{Lemma}[section]
\newtheorem{problem}{Problem}
\newtheorem{claim}{Claim}
\newtheorem{remark}{Remark}

\newtheorem{conjecture}{Conjecture}
\usepackage{float}

\usepackage{setspace}
\title{List Coloring of the Square of 4-Irregular Graphs}
\author{Sara Al Hajjar}
\date{}

\author{Sara Al Hajjar \footnote{
Kalma Laboratory, Lebanese University, Beirut, Lebanon \\
Univ. Bordeaux, CNRS, Bordeaux INP, LaBRI, UMR 5800, F-33400, Talence,
France
}}
\date{}
\begin{document}

\maketitle
\begin{abstract}
The square of a graph $G$ is the graph obtained from $G$ after adding an edge between any two vertices of distance $2$. A $k$-irregular graph is a graph with maximum degree $k$ such that vertices of degree $k$ are not adjacent.
A \textit{list assignment} of a graph is a function $L$ that assigns to each vertex a list of permissible colors. 
The graph is said to be \textit{$L$-colorable} if there exists a proper coloring $f$ such that $f(v) \in L(v)$ for every vertex $v$. A graph $G$ is called \textit{$k$-choosable} if it is $L$-colorable for every list assignment where each list has exactly $k$ colors. The \textit{list chromatic number} of $G$, denoted by $\chi_l(G)$, is the smallest integer $k$ for which $G$ is $k$-choosable. Cranston and Kim \cite{ck} showed that $\chi_l(G^2) \leq 8$ for all subcubic graphs except the Petersen Graph. Moreover, Cranston and Kim \cite{ck} conjectured that for graphs with maximum degree $k$ and maximum clique size $w (G^2)\leq k^2-1$, we have $\chi_l(G^2) \leq k^2-1$. We prove that for a 4-irregular graph $G$, we have $\chi_l(G^2) \leq 11$. Moreover, we provide an example to show that this bound is sharp.
\end{abstract}
\textbf{Keywords:} List coloring, irregular, square of a graph.

\section{Introduction}
\begin{large}
The discussion in this paper is restricted to finite simple graphs, with the usual notation conventions followed. In a graph $G$, the set of neighbors of a vertex $v$ is denoted by $N_G(v)$. The degree of a vertex $v$, written as $d_G(v)$, represents the number of its neighbors. 
For simplicity, we write $N(v)$ (resp., $d(v)$) instead of $N_G(v)$ (resp., $d_G(v)$). 
The maximum degree and minimum degree of $G$ are denoted by $\Delta(G)$ and $\delta(G)$, respectively. The maximum average degree of a graph $G$, denoted by $mad(G)$, is defined as
$ mad(G)=\max_{H\subsetneq G}\frac{2|E(H)|}{|V(H)|}$,
where the maximum is taken over all subgraphs $H$ of $G$. We say a graph $G$ is a $k$-irregular graph if $\Delta(G)=k$ and the vertices of degree $k$ in $G$ are not adjacent. A vertex of degree $k$ is called a $k$-vertex. We say $u$ is a $k$-neighbor of a vertex $v$ if $u$ is a $k$-vertex adjacent to $v$. The girth of a graph $G$ is the length of its minimal cycle and is denoted by $g(G)$. A $k$-cycle is a cycle of length $k$. Let $C$ be a cycle in $G$. A chord is an edge that joins two non-adjacent vertices in $C$. A clique in a graph $G$ is a subset of $V(G)$ that induces a complete subgraph of $G$. The clique number of $G$, denoted by $w(G)$, is the maximum order of a clique in $G$.

A list assignment of a graph $G$ is a function $L$ that assigns to each vertex $v \in V(G)$ a list $L(v)$ of available colors. 
A graph $G$ is said to be $L$-colorable if there exists a proper coloring $f$ such that $f(v) \in L(v)$ for every vertex $v$, and $f(u) \neq f(v)$ whenever $uv \in E(G)$. 
If $G$ is $L$-colorable for every list assignment $L$ in which each list $L(v)$ contains exactly $k$ colors, then $G$ is called $k$-choosable. 
The smallest integer $k$ for which $G$ is $k$-choosable is called the list chromatic number of $G$, denoted by $\chi_l(G)$. We are interested in the list chromatic number of $G^2$ when $G$ is a $4$-irregular graph.

\begin{figure}[h!]
\centering
\begin{tikzpicture}[scale=1.8, every node/.style={circle, fill=black, inner sep=1.8pt}]

  \node (x1) at (90:1.5) [label=above:$x_1$] {};
  \node (x2) at (18:1.5) [label=right:$x_2$] {};
  \node (x3) at (-54:1.5) [label=below right:$x_3$] {};
  \node (x4) at (-126:1.5) [label=below left:$x_4$] {};
  \node (x5) at (162:1.5) [label=left:$x_5$] {};

  \node (x6) at (90:0.7) [label=above:$x_6$] {};
  \node (x7) at (18:0.7) [label=right:$x_7$] {};
  \node (x8) at (-54:0.7) [label=below right:$x_8$] {};
  \node (x9) at (-126:0.7) [label=below left:$x_9$] {};
  \node (x10) at (162:0.7) [label=left:$x_{10}$] {};

  \node (v) at (0,-2.0) [label=below:$v$] {};

  \draw (x1)--(x2)--(x3)--(x4)--(x5)--(x1);

  \draw (x6)--(x8)--(x10)--(x7)--(x9)--(x6);

  \draw (x1)--(x6);
  \draw (x2)--(x7);
  \draw (x3)--(x8);
  \draw (x4)--(x9);
  \draw (x5)--(x10);

  \draw (v) .. controls (-3.5,-0.2) .. (x1);

  \draw (v)--(x8);
  \draw (v)--(x9);

\end{tikzpicture}

\caption{A $4$-irregular graph with $G^2=K_{11}$}
\label{fig:petersen_with_v}
\end{figure}

A $2$-distance $k$-coloring of a graph $G$ is a mapping $\phi : V(G) \to \{1,2,\ldots,k\}$ such that $\phi(v_1) \neq \phi(v_2)$ whenever $d(v_1,v_2) \leq 2$, where $v_1$ and $v_2$ are any two vertices in $G$.  
The $2$-distance chromatic number of a graph $G$, denoted by $\chi_2(G)$, is the minimum integer $k$ such that $G$ admits a $2$-distance $k$-coloring.
\\ 

Several papers have studied Wegner's conjecture \cite{11} regarding the  $2$-distance chromatic number of planar graphs. Wegner conjectured the following:
\begin{conjecture}(Wegner \cite{11}) If $G$  is a planar graph with maximum degree $\Delta$, then 
$\chi_2(G) \leq 7$ if $\Delta=3$, $\chi_{2} (G)  \leq\Delta + 5$ if $4 \leq \Delta \leq 7$ and $\chi_{2}(G)\leq \lfloor\frac{3\Delta}{2}\rfloor+1$ if $\Delta \ge 8$. \end{conjecture} 
This conjecture remains largely open. Thomassen~\cite{9} proved it for planar graphs with maximum degree $\Delta=3$. For larger values of $\Delta$, several upper bounds on $\chi_2(G)$ have been established. In particular, Agnarsson and Halldórsson~\cite{1} proved that
$\chi_2(G)\le \left\lfloor\frac{9\Delta}{5}\right\rfloor+2$
for planar graphs with $\Delta\ge749$, and this was later improved by Borodin \emph{et al.}~\cite{2} to
$\chi_2(G)\le \left\lceil\frac{9\Delta}{5}\right\rceil+1 $
for $\Delta\ge47$. More recently, Bousquet~\cite{3} proved that $\chi_2(G)\le2\Delta+7$ for $\Delta\ge9$ and $\chi_2(G)\le21$ for $\Delta\le6$. For planar graphs with $\Delta\le5$, Zhu and Bu~\cite{12} proved that $\chi_2(G)\le20$, and this bound was subsequently reduced to $19$, $18$, $17$, and finally $16$ by Chen~\cite{4}, Aoki~\cite{6}, Zou \emph{et al.}~\cite{13}, and Zakir Deniz~\cite{5}, respectively. Further results on the $2$-distance chromatic number of planar graphs can be found in \cite{7,8,10,12}.

\begin{conjecture} (Cranston \cite{ck})
 Let $G$ be a graph with maximum degree $k$ and $w(G^2)\leq k^2-1$. Then, we have $\chi_l(G^2) \leq k^2-1$.
    
\end{conjecture}

Cranston and Kim \cite{ck} showed that $\chi_l(G^2) \leq 8$ for all subcubic graphs except the Petersen Graph. Dolama and
Sopena \cite{ds} showed that $\chi_l(G^2)=\Delta +1$ when $\Delta(G)\ge 4$ and $mad(G)\leq \frac{16}{7}$. Cranston and \v{S}krekovski \cite{cs} proved that
$\chi_\ell^2(G)=\Delta+1$ whenever $\Delta\ge5$ and
$mad(G)<2+\frac{4\Delta-8}{5\Delta+2}$.

Let $G$ be a planar graph with $\Delta=3$ and girth $g$. Cranston and Kim \cite{ck} showed
that $\chi_{l}(G^2)\le 7$ if $g\ge 7$. More recently, Kim and Lian \cite{kl} showed that
$\chi_{l}(G^2)\le 7$ if $g\ge 6$. The same bound was proved by Jin, Kang,
and Kim \cite{jkk} for graphs with no $4$-cycles and no $5$-cycles; and by Kim, Lian,
Nakamoto, and Ozeki \cite{klno} for graphs with no $5$-cycles.
\\

We will prove that if $G$ is a $4$-irregular graph, then $\chi_l(G^2) \leq 11$. This bound is the best possible as illustrated in Figure \ref{fig:petersen_with_v}.
\\

The proof proceeds by assuming the existence of a minimal counterexample and deriving increasingly restrictive structural properties. We first exclude several short configurations, showing in particular that the graph has a girth at least $7$. These structural restrictions allow us to focus on a shortest cycle and its neighborhood. The final argument combines the obtained structural properties with carefully chosen list-coloring extensions to the whole graph, hence proving that the minimal counterexample graph does not exist, thereby completing the proof. 
\\

Throughout the paper, we partially color $G^2$ using lists of size 11. To extend the list-coloring to the whole graph $G^2$, we repeatedly use the notion of \emph{saving a color} at a vertex $v$. By this, we mean coloring two neighbors of $v$ in $G^2$ while reducing the number of available colors at $v$ by only one. A typical situation arises when $v$ is adjacent in $G^2$ to two nonadjacent vertices $v_1$ and $v_2$, and $|L(v_1)|+|L(v_2)|>|L(v)|$. This inequality guarantees either that $L(v_1)\cap L(v_2)\neq\emptyset$ or that $(L(v_1)\cup L(v_2))\setminus L(v)\neq\emptyset$. In the former case, we assign a common color to $v_1$ and $v_2$. In the latter, we color one of $v_1$ or $v_2$ with a color belonging to $(L(v_1)\cup L(v_2))\setminus L(v)$ and color the other arbitrarily. In either case, only one color is removed from the list of $v$.

\section{Main Result}
\begin{theorem} \label{thm1} 
If $G$ is a $4$-irregular graph, then $\chi_l(G^2) \leq 11$. \end{theorem}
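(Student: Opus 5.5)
We argue by minimal counterexample. Let $G$ be a $4$-irregular graph and $L$ an $11$-list assignment of $G^2$ such that $G^2$ is not $L$-colorable, chosen with $|V(G)|+|E(G)|$ minimum. Deleting a vertex or edge preserves $\Delta\le 4$ and the independence of $4$-vertices, and removing an edge never creates distance-$2$ pairs among the remaining vertices. Hence every proper subgraph $H$ of $G$ has $H^2$ $L$-colorable, and we try to extend such a coloring to $G$. The basic count is this. A $4$-vertex $v$ has only neighbors of degree at most $3$, so $|N_{G^2}(v)|\le 4+4\cdot 2=12$. A vertex of degree at most $3$ has at most one $4$-neighbor, so $|N_{G^2}(v)|\le 3+3+2\cdot 2=10$, or fewer. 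Consequently every vertex other than a $4$-vertex has at most $10<11$ neighbors in $G^2$, and such a vertex can always be colored last.

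\textbf{Reducible configurations.}
\begin{enumerate}
\item $\delta(G)\ge 2$. A $1$-vertex $u$ has $|N_{G^2}(u)|\le 4$, so we delete it, color, and extend.
\item Structures around $4$-vertices. Let $v$ be a $4$-vertex. After deleting $v$, its $G^2$-degree in $G$ is at most $12$, so we need to save two colors. We do this by coloring two pairs of nonadjacent $G^2$-neighbors of $v$ with common colors, or with colors outside $L(v)$, using the saving technique from the introduction. A vertex $w$ with $d_{G^2}(w)\le 10$ keeps at least one spare color after the rest is colored, so it can be uncolored and recolored last.
\item Short cycles. Through this mechanism we exclude $2$-vertices adjacent to low vertices, adjacent $2$-vertices, and triangles, $4$-cycles, $5$-cycles and $6$-cycles. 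In each case the vertices on the cycle have reduced $G^2$-degree, which gives extra slack. The result is girth at least $7$, as the paper announces.
\end{enumerate}
The Petersen-plus-vertex example shows the bound is tight, so in each reduction the slack really must be found, and the case $G^2=K_{11}$ is excluded precisely because there colors cannot be saved.

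\textbf{Shortest-cycle argument.} Once the girth is at least $7$ and the local structure is constrained (every $4$-vertex has all neighbors of degree exactly $3$, and $3$-vertices are adjacent to at most one $4$-vertex), take a shortest cycle $C$. Because $g\ge 7$, distance-$2$ conflicts along $C$ and its pendant neighbors form a nearly tree-like structure. We delete $C$, or $C$ together with suitable pendant vertices, and color the rest by minimality. We then count the remaining available lists: each cycle vertex of degree at most $3$ has at most $10$ neighbors in $G^2$, and some of them lie on $C$, so the residual lists along $C$ have sizes at least about $3$ to $4$. The square of a cycle is then colored from these lists, which is known to succeed when the lists have size at least $3$ except for small cases, plus a $4$-vertex handling.

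\textbf{Main obstacle.} The hardest step will be handling the $4$-vertices, which have deficit $1$ ($12$ neighbors versus $11$ colors). Whenever a $4$-vertex lies on or next to $C$, we must save a color at it by precoloring two of its nonadjacent second neighbors with a common color or with a color outside its list. This has to be done without destroying the slack of the $3$-vertices between them. I would first try the following order: color the $4$-vertices' far neighbors with a saved color, then the $4$-vertices, then the low-degree vertices greedily, and resolve any remaining parity or odd-cycle case by a Hall-type or kernel argument on the cycle square.
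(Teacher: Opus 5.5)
Your plan has a genuine gap, and it starts with a false degree count. $4$-irregularity only forbids two $4$-vertices from being adjacent. A $3$-vertex may have three $4$-neighbors (the added vertex $v$ in the sharpness example does), and in general such a vertex has up to $3+3\cdot 3=12$ neighbors in $G^2$, exactly as many as a $4$-vertex. In a minimal counterexample the truth is the reverse of the local structure you posit. A $3$-vertex with two $3$-neighbors is easily reducible (\Cref{l2}), so every $3$-vertex has at least two $4$-neighbors, and once short cycles are excluded it has $11$ or $12$ neighbors in $G^2$. So low vertices carry no automatic slack, you cannot simply color them last, and the deficit is not confined to $4$-vertices.

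Beyond this, your reducible configurations are only asserted. The chain $\delta\ge 3$, at most one $3$-neighbor per $3$-vertex, no triangles, restrictions on $4$-cycles, no $4$-cycles, no two $5$-cycles sharing two consecutive edges, no $5$-cycles, no $6$-cycles is the bulk of the proof. Each link needs its own list count and saving argument, and appealing to what the paper announces proves none of them. Invoking minimality by deleting vertices (a $2$-vertex, or $V(C)$) is also unsafe. It destroys $G^2$-edges between their neighbors, for example between the two outside neighbors of a $4$-vertex on $C$. The workable form deletes a single edge $e$, colors $(G-e)^2$, and then uncolors a set containing both ends of $e$.

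The cycle step fails as stated too. If only $V(C)$ is uncolored, each $v_i$ is guaranteed just $11-8=3$ colors. Squares of cycles are not colorable from $3$-lists in general: $C_7^2$ has independence number $2$, hence chromatic number $4$. Since nothing prevents the residual $3$-lists from coinciding, no argument confined to $C$ can work, and an unspecified Hall-type or kernel argument on $C^2$ does not rescue it.

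The idea you are missing is to uncolor, together with $V(C)$, one outside neighbor $x_i$ of each $v_i$. Using girth at least $7$ and \Cref{l10}, this gives $|L(v_i)|\ge 6$ and $|L(x_i)|\ge 2$. Repeated saving arguments then show that a counterexample must satisfy $L(x_i)\subseteq L(v_i)$, $L(x_i)\cap L(x_{i+1})=\emptyset$ and $L(x_{i-1})\cap L(x_{i+1})=\emptyset$ for every $i$. The paper then finishes by picking distinct $c_i,c_i'\in L(x_i)$ and coloring $x_i$ with $c_i$ and $v_i$ with $c_i'$.
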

The bound of \Cref{thm1} is sharp since we can find a $4$-irregular graph (see Fig. \ref{fig:petersen_with_v}) obtained from the Petersen Graph after adding a vertex and 3 edges such that $G^2=K_{11}$
and hence we have $\chi_l(G^2)=11$.
\\ 

Assume, for a contradiction, that \Cref{thm1} fails and let $G$ be a counterexample minimizing $|E(G)|$. Let $L$ be a list of size $11$. We will show that in fact we have $\chi_l(G^2) \leq 11$, a contradiction. So, $G$ does not exist and \Cref{thm1} holds. \\

Note that we always have $\chi_l(H^2) \leq 11$ for $H=G \backslash\{e\}$ for any edge $e\in E(G)$ by minimality of $|E(G)|$. Thus, to extend the list-coloring to the whole graph $G^2$, we only need to recolor the vertices of the edge $e$. Thus, we will always assume that by minimality of $|E(G)|$ we can color all the vertices of $G^2$ using lists of size 11 except two or more adjacent vertices.

\subsection{A Structural Analysis of G}
 Consider a list of colors of size 11 to each vertex $v$ in $G$, denoted by $L(v)$. 
 
 \begin{remark}Suppose that $G^2$ is partially colored using the lists of size $11$. Let $v$ be an uncolored vertex in $G^2$. Then, $|L(v)|\ge 11-k$ where $k$ is the number of colored neighbors of $v$ in $G^2$. 
 \end{remark}
\begin{remark}
    Suppose that $G^2$ is partially colored using the lists of size $11$. Let $v$ be an uncolored vertex in $G^2$. If $v$ has at least two uncolored neighbors in $G^2$, then $v$ has an available color.
\end{remark}

\begin{lemma} \label{l1} $\delta(G) \ge 3$. \end{lemma}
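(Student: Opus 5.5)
We argue by contradiction using the minimality of $|E(G)|$. Suppose $G$ has a vertex $v$ with $d(v)\le 2$. If $d(v)=0$, then $v$ is isolated: color $G-v$ by minimality and give $v$ any color from its list, since $v$ has no neighbors in $G^2$. So we assume $d(v)\in\{1,2\}$. Pick an edge $e=vu$ incident to $v$ and set $H=G\setminus\{e\}$. The graph $H$ is still $4$-irregular, or it has maximum degree at most $4$ with no adjacent $4$-vertices, which is all the proof uses. By minimality, $H^2$ has an $L$-coloring. We keep the colors of every vertex other than $v$, uncolor $v$, and try to recolor $v$.

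The first step is to check that the coloring of $H^2$ restricted to $V(G)\setminus\{v\}$ is still proper in $G^2$. Every edge of $G^2$ that is absent from $H^2$ comes from a path of length at most $2$ through $e$. Such a path either contains $v$ as an endpoint or has $v$ as its middle vertex.
\begin{itemize}
\item \emph{Paths with $v$ as an endpoint.} These are $vu$ and $v$--$u$--$w$ with $w\in N(u)$. They only involve $v$, so they do not matter once $v$ is uncolored.
\item \emph{Paths with $v$ as the middle vertex.} These are $u$--$v$--$u'$, where $u'$ is the other neighbor of $v$ when $d(v)=2$. The pair $u,u'$ may now need distinct colors.
\end{itemize}
This middle-vertex case is the main obstacle. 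I would handle it by also uncoloring $u$, in the spirit of the paper's remark that ``we can color all vertices except two or more adjacent vertices.'' So we keep the coloring of $V(G)\setminus\{u,v\}$ and recolor $u$ and then $v$.

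The second step is to count the $G^2$-neighbors of $u$ and $v$. Since $d(v)\le2$ and $\Delta(G)=4$, we have $|N_{G^2}(v)|\le d(u)+d(u')\le 4+4=8$. The degree $d(u)$ is at most $4$, and $u$ has at most $4$ neighbors in $G$. Each of those neighbors has at most $3$ further neighbors, but the irregularity condition sharpens this bound. If $d(u)=4$, then every neighbor of $u$ has degree at most $3$, so
\[|N_{G^2}(u)|\le 4+4\cdot 2=12.\]
If $d(u)\le 3$, then each neighbor $w$ has $d(w)\le 4$, so
\[|N_{G^2}(u)|\le 3+3\cdot 3=12.\]
Either way $u$ has at most $12$ neighbors in $G^2$. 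One of them is $v$, which is uncolored, and since $v$ has degree at most $2$ we can save further.

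If $u$ still risks having no color left, I would instead choose $u$ so that its degree is small. When $d(v)=2$, we pick $e=vu$ with $u$ the neighbor of smaller degree, or use the count above more carefully. With $v$ uncolored, $u$ has at most $11$ colored $G^2$-neighbors. To get a strict inequality, I would note that if $d(u)=4$, then $u$'s neighbor $v$ has degree at most $2$, so $v$ contributes at most $1$ extra vertex instead of $2$. This gives $|N_{G^2}(u)|\le 11$, hence at most $10$ colored neighbors, and $u$ gets a color from its list of size $11$. The case $d(u)\le3$ is analogous, because $v$ contributes at most $1$ extra vertex beyond itself. After $u$ is colored, $v$ has at most $8<11$ colored $G^2$-neighbors, so $v$ also receives a color. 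This gives an $L$-coloring of $G^2$, contradicting the choice of $G$, and so $\delta(G)\ge3$.
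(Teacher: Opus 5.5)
Your proof is correct and is essentially the paper's argument: delete an edge $vu$, keep the coloring of $H^2$ on $V(G)\setminus\{u,v\}$, then recolor $u$ (at most $10$ colored $G^2$-neighbors, since $v$ contributes only one vertex beyond itself) and then $v$ (at most $8$ colored $G^2$-neighbors). Two small remarks: the detour about choosing $u$ of small degree is unnecessary, since your careful count works for any $u$. In the isolated-vertex case, $G-v$ has the same number of edges as $G$, so "minimality" there needs a secondary minimization on $|V(G)|$ (the paper's proof silently skips this case).
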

\begin{proof} Suppose, to the contrary, that $G$ has a vertex $v$ such that $d(v) \leq 2$. Let $v_1\in N(v)$, and let  $H=G-\{vv_1\}$. By minimality  of $|E(G)|$, we have $\chi_l(H^2) \leq 11$. Now, we uncolor $v$ and $v_1$ in $G$. We have $|L(v_1)| \ge 1$ and $|L(v)| \ge 4.$ Thus, we can greedily color $v_1$ then $v$ in $G^2$ to get $\chi_l(G^2) \leq 11$, a contradiction. \end{proof}

\begin{lemma} \label{l2} A $3$-vertex has at most one neighbor of degree $3$. \end{lemma}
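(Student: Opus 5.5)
Suppose, to the contrary, that a $3$-vertex $v$ has two neighbors $u$ and $w$ of degree $3$, and let $x$ be the third neighbor of $v$. By \Cref{l1}, every vertex has degree $3$ or $4$. We follow the scheme of \Cref{l1}: delete the edge $e=vu$, set $H=G-\{vu\}$, and use minimality of $|E(G)|$ to get an $L$-coloring of $H^2$. We then uncolor $v$ and $u$ and recolor them greedily in $G^2$. Uncoloring is needed because $v$ and $u$ are adjacent in $G$ but not in $H$, and some pairs at distance two in $G$ through the edge $vu$ may be nonadjacent in $H^2$. Any pair not involving $v$ or $u$ that is adjacent in $G^2$ is still adjacent in $H^2$ unless its only connecting path uses $vu$. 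Such a pair has the form $(w,u)$, $(x,u)$, or $(v,y)$ with $y\in N(u)$, so every such pair involves $v$ or $u$. Hence the coloring of $H^2$ restricted to $V(G)\setminus\{u,v\}$ is proper in $G^2$.

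Next we bound the number of colored $G^2$-neighbors of each uncolored vertex. The $G^2$-neighborhood of $v$ consists of $N(v)=\{u,w,x\}$ and the further neighbors of $u$, $w$ and $x$. Its size is at most $3+2+2+3=10$, attained when $d(x)=4$. Excluding $u$ itself, $v$ has at most $9$ colored $G^2$-neighbors, so at least $2$ colors remain. The $G^2$-neighborhood of $u$ consists of $v$, the two other neighbors $u_1,u_2$ of $u$, the other neighbors of $u_1$ and $u_2$ (at most $3$ each, since the max degree is $4$), and the other neighbors $w,x$ of $v$. That gives at most $1+2+6+2=11$. Excluding $v$, $u$ has at most $10$ colored $G^2$-neighbors, so at least $1$ color remains.

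Therefore we color $u$ first with an available color, and then color $v$, which still has at least $2-1=1$ color left. This yields an $L$-coloring of $G^2$, contradicting the choice of $G$. The count for $u$ is tight, and it is the main point to check: it relies on $d(u)=3$ and on $\Delta=4$. We do not need the irregularity condition here. As a fallback, if the count for $u$ were one too many, we would delete $vw$ symmetrically or order the coloring differently. The count for $v$ has enough slack.
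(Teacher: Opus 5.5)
Your proof is correct and is the same argument as the paper's. Both delete the edge $vu$ to a $3$-neighbor, color everything else by minimality, note $|L(v)|\ge 2$ (this uses the second $3$-neighbor $w$) and $|L(u)|\ge 1$, and then color $u$ before $v$. Your explicit neighborhood counts and your check that the coloring of $H^2$ stays proper on $V(G)\setminus\{u,v\}$ only fill in details the paper leaves implicit, and the closing fallback remark is unnecessary since the count for $u$ already works.
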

\begin{proof} Suppose, to the contrary, that $G$ has a $3$-vertex $v$ adjacent to two $3$-vertices. Let $v_1\in N(v)$ such that $d(v_1)=3$. By minimality of $|E(G)|$, we can greedily color all the vertices in $G^2$ except $v$ and $v_1$. We have $|L(v)| \ge 2$ and $
|L(v_1)|\ge 1$. Thus, we greedily color $v_1$ then $v$ in $G^2$ to get $\chi_l(G^2) \leq 11$, a contradiction. \end{proof}
\begin{lemma} \label{l3} $g(G) \ge 4$. 
\end{lemma}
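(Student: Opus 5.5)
The plan is to suppose that $G$ contains a triangle $v_1v_2v_3$, show that two of its vertices must be $3$-vertices with few neighbors in $G^2$, and then recolor them after deleting the edge between them, as in \Cref{l1} and \Cref{l2}.

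\textbf{Step 1: the degrees on the triangle.} By \Cref{l1}, every vertex of the triangle has degree $3$ or $4$. Since $4$-vertices are pairwise nonadjacent, at most one vertex of the triangle is a $4$-vertex. So at least two of them, say $v_1$ and $v_2$, are $3$-vertices. Then $v_1$ already has the $3$-neighbor $v_2$, so by \Cref{l2} its other triangle neighbor $v_3$ is not a $3$-vertex. Hence $d(v_3)=4$. Write $N(v_1)=\{v_2,v_3,x\}$, $N(v_2)=\{v_1,v_3,y\}$ and $N(v_3)=\{v_1,v_2,a,b\}$.

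\textbf{Step 2: bounding the $G^2$-degrees of $v_1$ and $v_2$.} The $G^2$-neighbors of $v_1$ lie in
\[
\{v_2,v_3,x\}\cup\{y\}\cup\{a,b\}\cup\bigl(N(x)\setminus\{v_1\}\bigr).
\]
Since $d(x)\le 4$, this set has at most $3+1+2+3=9$ elements. Any coincidences among these vertices only lower the count. By symmetry, $v_2$ also has at most $9$ neighbors in $G^2$.

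\textbf{Step 3: recoloring.} Let $H=G-\{v_1v_2\}$. Removing an edge cannot create adjacent $4$-vertices or raise the maximum degree, so the minimality of $|E(G)|$ applies and gives an $L$-coloring of $H^2$. Uncolor $v_1$ and $v_2$. Each of them has at most $9$ neighbors in $G^2$, one of which is the other, still uncolored. So each sees at most $8$ colored vertices, and its list keeps at least $11-8=3$ available colors. Color $v_1$ greedily; $v_2$ then still has at least $2$ available colors, so it can be colored too. This gives an $L$-coloring of $G^2$, a contradiction, so $g(G)\ge 4$.

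\textbf{Main obstacle.} The only delicate point is Step 1: we need $v_3$ to be a $4$-vertex to justify the exact neighbor counts, and this is where \Cref{l2} is used. Even without that, the counting has ample slack.
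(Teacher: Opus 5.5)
Your proof is correct and follows essentially the same route as the paper. Both use $4$-irregularity and \Cref{l2} to force exactly one $4$-vertex on the triangle, invoke minimality on the graph with one triangle edge deleted, and finish greedily from the resulting list-size counts. The only difference is cosmetic: the paper uncolors all three triangle vertices and notes each keeps at least $3$ available colors, while you uncolor only the two $3$-vertices, each of which keeps at least $3$ available colors.
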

\begin{proof} Suppose that $G$ has a 3-cycle $C$. Let $v_1v_2v_3$ be this cycle. Since $G$ is $4$-irregular, at most one vertex of $C$ is of degree $4$. By \Cref{l2}, we deduce that exactly one vertex is of degree $4$. By minimality of $|E(G)|$, we can greedily color all the vertices in $G^2$ except $v_1$, $v_2$, and $v_3$. Since  $|L(v_i)| \ge 3$ for $i=1,2,3$, we can greedily color $v_1$, $v_2$, and $v_3$ in $G^2$ to obtain $\chi_l(G^2) \leq 11$, a contradiction. \end{proof}
\begin{lemma} \label{l5} If a $3$-vertex is contained in a $4$-cycle, then all its neighbors are of degree $4$. \end{lemma}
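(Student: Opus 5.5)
We argue by contradiction, in the same way as \Cref{l2}. Let $v$ be a $3$-vertex lying on a $4$-cycle $vxyzv$, and let $w$ be its third neighbour, so $N(v)=\{x,z,w\}$. Suppose that some neighbour $u\in\{x,z,w\}$ of $v$ has degree $3$. By \Cref{l1}, $d(u)\ge 3$, so this is the only alternative to $d(u)=4$. By \Cref{l2}, $u$ is the unique $3$-neighbour of $v$, and the other two neighbours of $v$ have degree $4$. Applying \Cref{l2} to $u$, whose $3$-neighbour is already $v$, the two neighbours of $u$ other than $v$ also have degree $4$.

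Let $H=G-\{vu\}$. By minimality of $|E(G)|$, $H^2$ is $L$-colourable. As in the previous lemmas, we keep this colouring on every vertex except $v$ and $u$, and we uncolour $v$ and $u$. It then suffices to show that $u$ and $v$ can be coloured greedily in $G^2$, which gives the contradiction.

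\emph{Bounding the list of $v$.} We count the vertices of $G$ at distance at most $2$ from $v$. There are three neighbours. Through $u$ we reach at most $d(u)-1=2$ further vertices, and through each of the two $4$-neighbours at most $3$. This gives at most $3+2+3+3=11$ vertices. However, $y$ is a common neighbour of $x$ and $z$, so it is counted twice. Since $g(G)\ge 4$ by \Cref{l3}, $y$ is not a neighbour of $v$, so this double count is a genuine overlap. Hence $v$ has at most $10$ neighbours in $G^2$. One of them is the uncoloured vertex $u$, so $v$ has at most $9$ coloured neighbours, and $|L(v)|\ge 2$.

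This count does not depend on whether $u$ lies on the cycle ($u\in\{x,z\}$) or not ($u=w$). In both cases the overlap at $y$ is present, and the degree sum over $N(v)$ is $3+4+4$.

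\emph{Bounding the list of $u$.} The vertex $u$ has $3$ neighbours. Through $v$ it reaches at most $2$ further vertices, and through each of its two $4$-neighbours at most $3$. So $u$ has at most $11$ neighbours in $G^2$, one of which is the uncoloured $v$, and therefore $|L(u)|\ge 1$.

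We now colour $u$ first, and then $v$, which still has at least $2-1=1$ available colour. This yields an $L$-colouring of $G^2$, a contradiction.

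The main obstacle is the count for $v$: without the $4$-cycle, $v$ could have up to $11$ vertices within distance $2$, and the argument would fail. The key point is that the $4$-cycle forces the repeated second neighbour $y$ regardless of where $u$ sits. This is exactly what saves the one colour needed.
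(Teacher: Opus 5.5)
Your proof is correct and is the paper's argument: delete the edge $vu$, keep the colouring off $\{u,v\}$, and colour $u$ (using $|L(u)|\ge 1$) before $v$ (using $|L(v)|\ge 2$). Your explicit count, in which the common neighbour $y$ of the two cycle-neighbours of $v$ saves the needed colour, supplies the justification that the paper's one-line proof leaves implicit.
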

\begin{proof} Suppose that $G$ has a $3$-vertex $v$ contained in a $4$-cycle and having a $3$-neighbor $v_1$. By minimality of $|E(G)|$, we can greedily color all the vertices in $G^2$ except $v$ and $v_1$. We have $|L(v)| \ge 2$ and $
|L(v_1)|\ge 1$. Thus, we greedily color $v_1$ then $v$ in $G^2$ to get $\chi_l(G^2) \leq 11$, a contradiction.   \end{proof}

\begin{lemma} \label{l4} A vertex $v$ is contained in at most one $4$-cycle. \end{lemma}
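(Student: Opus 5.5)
The plan is a counting argument on $G^2$-neighborhoods, combined with the edge-deletion technique of \Cref{l1} and \Cref{l2}. First I would record a general bound: every vertex $x$ of $G$ satisfies $|N_{G^2}(x)|\le 12$, counting vertices at distance $1$ and $2$ through paths of length $2$ from $x$.
\begin{itemize}
\item If $d(x)=4$, all neighbors of $x$ have degree at most $3$ by $4$-irregularity. So there are at most $4$ neighbors and at most $4\cdot 2=8$ paths of length $2$, giving at most $12$.
\item If $d(x)=3$, then by \Cref{l2} at most one neighbor has degree $3$ and the others have degree at most $4$. This gives at most $3+3\cdot 3=12$, and at most $11$ if $x$ has a $3$-neighbor.
\item If $d(x)\le 2$, this is excluded by \Cref{l1}.
\end{itemize}
Since $g(G)\ge 4$ by \Cref{l3}, no path of length $2$ from $x$ ends at a neighbor of $x$. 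Hence every coincidence of endpoints of two distinct $2$-paths from $x$ lowers the count below $12$. Such a coincidence happens exactly when $x$ lies on a $4$-cycle, with the repeated endpoint being the vertex opposite to $x$ on that cycle.

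Now suppose, for a contradiction, that $v$ lies on two distinct $4$-cycles $C_1$ and $C_2$. I would show that these two cycles produce at least two "lost" endpoints in the $2$-path count from $v$, so that $|N_{G^2}(v)|\le 10$. There are two cases.
\begin{itemize}
\item If the cycles have distinct opposite vertices $w_1\neq w_2$, each of $w_1,w_2$ is reached by at least two $2$-paths from $v$. This loses at least $2$ from the count. This case includes two cycles $vuab$ and $vucd$ sharing the edge $vu$, whose opposite vertices $a\ne c$ are distinct.
\item If the cycles share the opposite vertex $w$, say $v a w b$ and $v c w d$, then $\{a,b\}\neq\{c,d\}$. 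So $w$ is reached by at least three $2$-paths from $v$, which again loses at least $2$.
\end{itemize}
In both cases $|N_{G^2}(v)|\le 10$. When $d(v)=3$, I would use \Cref{l5} to know that all neighbors of $v$ have degree $4$. Then the baseline count is exactly $12$ and the same deficit argument applies. If instead some neighbor has smaller degree, the count only improves.

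Then I would finish as in the earlier lemmas. Pick a neighbor $u$ of $v$ lying on $C_1$, set $H=G-\{vu\}$, and color $H^2$ by minimality of $|E(G)|$. Then uncolor $u$ and $v$. In $G^2$, $u$ has at most $12$ neighbors, one of which is the uncolored $v$, so $|L(u)|\ge 1$. Similarly, $v$ has at most $10$ neighbors, one of which is the uncolored $u$, so $|L(v)|\ge 2$. We color $u$ first and then $v$, which is a contradiction.

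The main obstacle is the careful bookkeeping in the deficit count. One must make sure that the two $4$-cycles really yield two independent savings and not a single one counted twice. The case where the two cycles share both $v$ and its opposite vertex needs the "three paths to $w$" observation. One must also check that the triangle-freeness from \Cref{l3} prevents any other overlaps from being miscounted.
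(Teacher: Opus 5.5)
\textbf{An off-by-one error in the final step.} You write that $u$ has at most $12$ neighbors in $G^2$, one of which is the uncolored $v$, and conclude $|L(u)|\ge 1$. But that leaves up to $11$ colored neighbors, which can use all $11$ colors of $L(u)$. So this count only gives $|L(u)|\ge 0$, and the greedy extension may fail.

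The repair uses your own deficit observation, applied at $u$. Since $u$ lies on the $4$-cycle $C_1$, the vertex of $C_1$ opposite to $u$ is the endpoint of two distinct $2$-paths from $u$. Hence $|N_{G^2}(u)|\le 11$, so $u$ has at most $10$ colored neighbors and $|L(u)|\ge 1$. This is exactly why $u$ must be chosen on one of the two $4$-cycles; a neighbor of $v$ with $12$ neighbors in $G^2$ would not work. You should state this explicitly. With the fix, coloring $u$ first and then $v$ (which has $|L(v)|\ge 2$) completes the proof.

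\textbf{Comparison with the paper.} Once fixed, your argument is more careful than the paper's, although it uses the same edge-deletion scheme. The paper asserts that some neighbor $v_1$ of $v$ also lies on two $4$-cycles, and then claims $|L(v)|\ge 2$ and $|L(v_1)|\ge 2$ without giving a count. That assertion is not justified there and need not hold, for instance when the two $4$-cycles meet only in $v$.

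Your argument needs the double deficit only at $v$ and a single deficit at the neighbor $u$. Your multiplicity count of $2$-paths makes the bound $|N_{G^2}(v)|\le 10$ rigorous, including the case of a common opposite vertex $w$ reached by three $2$-paths. Finally, \Cref{l5} is unnecessary in your version: the bound $d(x)+\sum_{y\in N(x)}(d(y)-1)\le 12$ holds for every vertex $x$ of a $4$-irregular graph, whatever the degrees of its neighbors.
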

\begin{proof} Suppose that $G$ has a vertex $v$ contained in two $4$-cycles. Then, there exists $v_1\in N(v)$ such that $v_1$ is also contained in two $4$-cycles. Since $G$ is $4$-irregular, one of $v$ and $v_1$ is a $3$-vertex. By \Cref{l5}, we deduce that one of $v$ and $v_1$ is a $4$-vertex. By minimality of $|E(G)|$, we can greedily color all the vertices in $G^2$ except $v$ and $v_1$. We have $|L(v)| \ge 2$ and $
|L(v_1)|\ge 2$. Thus, we greedily color $v_1$ and $v$ in $G^2$ to get $\chi_l(G^2) \leq 11$, a contradiction. \end{proof}

\begin{lemma} \label{l6} $g(G) \ge 5$. \end{lemma}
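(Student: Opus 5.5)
The plan is to rule out $4$-cycles by pinning down their exact degree pattern with the earlier lemmas and then running a counting and recoloring argument of the same type used so far. Suppose $C=v_1v_2v_3v_4$ is a $4$-cycle; by \Cref{l3} it has no chords.

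\emph{Step 1 (degree pattern).} Since $4$-vertices are pairwise nonadjacent, $C$ contains a $3$-vertex, say $v_1$ (using \Cref{l1} and $\Delta(G)=4$). By \Cref{l5}, its neighbors $v_2,v_4$ are $4$-vertices. Then $v_3$ is adjacent to a $4$-vertex, so $d(v_3)=3$. Let $w$ and $w'$ be the third neighbors of $v_1$ and $v_3$; both are $4$-vertices by \Cref{l5}. I will check the following.
\begin{itemize}
\item $w\neq w'$: otherwise $v_1wv_3v_2$ is a second $4$-cycle through $v_1$, contradicting \Cref{l4}.
\item $w\notin N(v_2)\cup N(v_4)$: this would create a triangle, against \Cref{l3}.
\item $w$ is not adjacent to $v_3$ in $G^2$: $w$ and $v_3$ are nonadjacent ($w\neq w'$ and $d(v_3)=3$). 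A common neighbor would be $v_2$ or $v_4$ (excluded above) or a new vertex, which would give a second $4$-cycle through $v_1$.
\item All neighbors of $w$ are $3$-vertices.
\end{itemize}

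\emph{Step 2 (list sizes).} Delete the edge $v_1v_2$ and color $H^2$ by minimality of $|E(G)|$. Then uncolor $v_1,v_2,v_3,v_4$ and $w$. I will bound the lists as follows.
\begin{itemize}
\item $v_1$: its $G^2$-neighborhood has at most $3+1+2+2+3=11$ vertices. Four of them ($v_2,v_3,v_4,w$) are uncolored, so $|L(v_1)|\ge 4$.
\item $v_2$: it has its four neighbors, plus $v_4$, $w$, $w'$, plus at most two further vertices through each of its other two $3$-neighbors ($v_2$ is a $4$-vertex, so those neighbors have degree at most $3$). That is at most $11$ vertices, with $v_1,v_3,v_4,w$ uncolored, so $|L(v_2)|\ge 4$. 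By symmetry $|L(v_4)|\ge 4$.
\item $v_3$: the same count as for $v_1$ gives at most $11$ vertices, of which only $v_1,v_2,v_4$ are uncolored, so $|L(v_3)|\ge 3$.
\item $w$: it has $4$ neighbors, each a $3$-vertex contributing at most $2$ more, so at most $12$ vertices. Three of them ($v_1,v_2,v_4$) are uncolored, so $|L(w)|\ge 2$.
\end{itemize}
If some count is smaller because of coincidences, the lists are only larger.

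\emph{Step 3 (saving a color).} The vertices $v_3$ and $w$ are both $G^2$-neighbors of $v_1$ but are not adjacent in $G^2$. Moreover $|L(v_3)|+|L(w)|\ge 5>4$, so the saving technique from the introduction applies. We color $v_3$ and $w$ either with a common color, or with one of them receiving a color outside $L(v_1)$. Afterwards $|L(v_1)|\ge 3$, while $v_2$ and $v_4$ each lose at most two colors, keeping $|L(v_2)|,|L(v_4)|\ge 2$. We then color greedily in the order $v_2$, $v_4$, $v_1$, with available lists of size at least $2$, $1$ and $1$ respectively. This extends the coloring to $G^2$, a contradiction.

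I expect the main obstacle to be the nonadjacency and distinctness checks in Step 1, together with the neighborhood counts in Step 2. These are exactly what prevent the naive argument from ending with a $K_4$ in $G^2$ whose lists all have size $3$. The saving move requires $w$ and $v_3$ to be at distance at least $3$, which is where \Cref{l3} and \Cref{l4} are used.
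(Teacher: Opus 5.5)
Your argument is correct and is essentially the paper's proof. It uses the same degree pattern: $d(v_1)=d(v_3)=3$, $d(v_2)=d(v_4)=4$, and the third neighbors $w=x_1$, $w'=x_3$ are $4$-vertices. It also uses the same key move: color $w$ and $v_3$, which are at distance $3$, so as to save a color at $v_1$. The only difference is that the paper also uncolors $x_3$, getting $|L(v_2)|,|L(v_4)|\ge 5$ and $|L(v_3)|\ge 4$, and then colors $x_3$ first. You leave $w'$ colored. Your bounds $|L(v_1)|,|L(v_2)|,|L(v_4)|\ge 4$, $|L(v_3)|\ge 3$, $|L(w)|\ge 2$ still close the argument, so this is a harmless simplification. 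It also spares you the paper's check that $d(v_1,x_3)=3$.

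One justification in Step 1 is wrong, although the claim itself is true. Since $N(v_3)=\{v_2,v_4,w'\}$, the only remaining candidate for a common neighbor of $w$ and $v_3$ is $w'$. But $ww'\in E(G)$ would create the $5$-cycle $v_1ww'v_3v_2$, not a second $4$-cycle through $v_1$, so \Cref{l4} gives no contradiction here. The correct reason, which is the one the paper uses, is that $w$ and $w'$ are both $4$-vertices by \Cref{l5}. Hence they are nonadjacent because $G$ is $4$-irregular. Your own fourth bullet, that all neighbors of $w$ are $3$-vertices, already gives exactly this.
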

\begin{proof} Suppose that $G$ has a $4$-cycle $C=v_1v_2v_3v_4$. Since $G$ is $4$-irregular, we deduce that $C$ contains at least two vertices of degree $3$. By \Cref{l5}, we deduce that $C$ contains exactly two non-adjacent vertices of degree 3. Without loss of generality, suppose that $d(v_1)=d(v_3)=3$ and $d(v_2)=d(v_4)=4$. Since $g(G)>3$ by \Cref{l3}, $v_1$ and $v_3$ are not adjacent. Let $x_1 \in N(v_1)\backslash \{v_2,v_4\}$ and $x_3 \in N(v_3)\backslash \{v_2,v_4\}$. By \Cref{l5}, we deduce that $d(x_1)=d(x_3)=4$. We will show that $d(x_1,v_3)=3$. \\
If $d(x_1, v_3)=1$, then we get a vertex $v_3$ contained in two $4$-cycles, a contradiction by \Cref{l5}. So, $x_1\neq x_3$.  \\
If $d(x_1,v_3)=2$, then $x_1$ and $v_3$ share a common neighbor. We have $N(v_3)=\{v_2,v_4,x_3\}$. Since $g(G)>3$, $v_2 \notin N(x_1)$ and $v_4 \notin N(x_1)$. So, $x_3\in N(x_1)$. By \Cref{l4}, we deduce that $d(x_1)=d(x_3)=4$. Since $G$ is $4$-irregular and $x_1$ and $x_3$ are adjacent $4$-vertices, we obtain a contradiction.
\\
Thus, we have $d(x_1, v_3)=3$. Similarly, we have  $d(v_1, x_3)=3$.\\
By minimality of $|E(G)|$, we can greedily color all the vertices in $G^2$ except $V(C)$, $x_1$, and $x_3$. We have $|L(v_1) |\ge 4$, $|L(v_2)| \ge 5$, $|L(v_3) |\ge 4$, $|L(v_4)|\ge 5$, $|L(x_1) |\ge 2$, and $|L(x_3)| \ge 2$. \\
Since $d(x_1, v_3)=3$ and $|L(x_1)|
+|L(v_3)|> 4$, we can color $x_1$ and $v_3$ by $c_1$ and $c_2$, respectively, such that $|L(v_1)\backslash \{c_1,c_2\}| \ge 3$. Color $x_1$ and $v_3$ by $c_1$ and $c_2$, respectively, and call the new list $L_1$. We have $|L_1(v_1) |\ge 3$, $|L_1(v_2)| \ge 3$, $|L_1(v_4)|\ge 3$, and $|L_1(u_3)| \ge 1$. Recall that $d(v_1, x_3)=3$ and so coloring $x_3$ in $G^2$ does not affect $|L(v_1)|$.
Greedily color in order $x_3$, $v_2$, $v_4$, and $v_1$ in $G^2$ to obtain $\chi_l(G^2) \leq 11$, a contradiction. \end{proof}
\begin{lemma} \label{l7} $G$ has no two $5$-cycles sharing two consecutive edges. \end{lemma}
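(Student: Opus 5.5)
The plan is to argue by contradiction on the configuration. Suppose $G$ contains two $5$-cycles $C=v_1v_2v_3v_4v_5$ and $C'=v_1v_2v_3x_4x_5$ sharing the path $v_1v_2v_3$, with $\{v_4,v_5\}\cap\{x_4,x_5\}=\emptyset$. Since $g(G)\ge 5$ by \Cref{l6}, the union $C\cup C'$ is a theta graph: the middle vertex $v_2$ has the two internal branches through $v_1$ and $v_3$, and $v_5v_4v_3x_4x_5v_1v_5$ is a $6$-cycle. The first step is to use $g(G)\ge5$ to check that these seven vertices are distinct and that there are no extra edges among them. Any extra edge would create a cycle of length at most $4$. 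For example, $v_4x_5$ would give the $4$-cycle $v_4x_5v_1v_5$, and $v_5x_5$ would give the triangle $v_5v_1x_5$. Consequently, in $G^2$ the vertex $v_2$ is adjacent to all six other vertices of $S=\{v_1,\dots,v_5,x_4,x_5\}$. Moreover, $v_1$ and $v_3$ are each adjacent in $G^2$ to five vertices of $S$, and $v_4,v_5,x_4,x_5$ each have at least four $G^2$-neighbours inside $S$.

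Next I would pin down the degrees. On $C$ and on $C'$ the degree-$4$ vertices form an independent set. Combined with \Cref{l2} and \Cref{l5}-type restrictions, this leaves few degree patterns, essentially determined by whether $v_2$ or the pair $\{v_1,v_3\}$ carries the $4$-vertices. By minimality of $|E(G)|$, I delete an edge of $S$ (say $v_1v_2$), take an $L$-colouring of $(G-v_1v_2)^2$, and uncolour every vertex of $S$. For $u\in S$ the number of coloured $G^2$-neighbours is at most $d(u)+\sum_{w\in N(u)}(d(w)-1)$ minus the number of $G^2$-neighbours of $u$ lying in $S$. Because $4$-vertices are nonadjacent, $d(u)+\sum_{w\in N(u)}(d(w)-1)\le 4+4\cdot 2=12$ when $d(u)=4$, and at most $3+3+2+2=10$ when $d(u)=3$, since at most one neighbour of a $3$-vertex has degree $3$ by \Cref{l2}. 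This gives concrete lower bounds: $|L(v_2)|\ge 5$ or more, $|L(v_1)|,|L(v_3)|\ge 4$, and $|L(v_4)|,|L(v_5)|,|L(x_4)|,|L(x_5)|\ge 3$ roughly, with the exact values depending on the degree pattern.

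Finally I colour $S$ greedily in an order ending with the vertices of largest slack, such as $v_2$ last. Where the counts are tight, I would insert a saving step as described in the introduction. I would pick a pair at distance $3$ in $G$, for instance $v_4$ and $x_5$ (the path $v_4v_5v_1x_5$ has length $3$ and no shorter path exists by girth), whose list sizes sum to more than that of a common $G^2$-neighbour such as $v_1$. Colouring this pair while removing only one colour from $v_1$'s list, and symmetrically using $x_4,v_5$ to save at $v_3$, should leave enough room to colour $v_5,x_4$, then $v_1,v_3$, then $v_2$.

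The main obstacle is the tight case in which the $4$-vertices sit on $v_1$ and $v_3$ (or on $v_4,x_5$-type positions). There the outer vertices have lists of size only about $2$ or $3$, and a single saving may not suffice. In that case I would first try choosing the deleted edge differently, for example at an outer vertex, and uncolouring a smaller subset of $S$. The aim would be to make the greedy order work with two disjoint savings, one at $v_1$ and one at $v_3$, invoking \Cref{l5} to exclude the degree patterns where the count fails.
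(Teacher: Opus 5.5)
Your setup matches the paper's: its middle vertex $v_7$ is your $v_2$, and your saving pairs $\{v_4,x_5\}$ at $v_1$ and $\{v_5,x_4\}$ at $v_3$ are its antipodal pairs $\{v_3,v_6\}$ and $\{v_2,v_5\}$ of the $6$-cycle. There is a genuine gap, however: the step that makes these savings available is false. You assert $d(v_4,x_5)=3$ ``by girth'', but at this stage only $g(G)\ge 5$ is known (\Cref{l6}). That rules out the edge $v_4x_5$, but not a common neighbour $y\notin S$. Such a $y$ creates a third $5$-cycle $yv_4v_5v_1x_5$ sharing the consecutive edges $v_4v_5,v_5v_1$ with $C$. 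This is exactly the configuration you are trying to exclude, so it cannot be assumed away, and the same holds for $\{v_5,x_4\}$.

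If at least one pair is at distance $3$, a single saving works (the paper's Subcases~1--2 of Cases~1 and~2). If both pairs are at distance $2$, then $G^2[S]=K_7$ and no non-adjacent pair remains in $S$. The lists you get on $S$ (sizes $5$ or $6$) need not satisfy Hall's condition, so no ordering or saving inside $S$ can finish. The paper treats this as a separate subcase in every degree case. It additionally uncolours the two common neighbours and one more neighbour of a $4$-vertex of $S$ (its $x_6$, resp.\ $y_1$). It then uses $g(G)\ge 5$ to show this new vertex is at distance $3$ from a suitable vertex of $S$, and saves with that pair. Your proposal has no substitute for this step.

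Your counting is also wrong, which is why you misplace the difficulty. By \Cref{l2} a $3$-vertex has at least two $4$-neighbours, so it can have $3+3+3+3=12$ neighbours in $G^2$, not at most $10$. Also, $v_1$ and $v_3$ are $G^2$-adjacent to all six other vertices of $S$ (e.g.\ via $v_1x_5x_4$ and $v_1v_5v_4$), not five. In the case you call tight ($d(v_1)=d(v_3)=4$), every vertex of $S$ actually keeps at least $5$ colours (the paper's Case~3). Once both antipodal pairs are at distance $3$, that case closes in one of two ways: give one pair a common colour, or, if neither pair has a common colour, colour $v_5,x_4$ so that $v_4$ and $x_5$ each lose only one colour. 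Your fallback there (another deleted edge, a smaller uncoloured set, \Cref{l5}) is not an argument. Moreover, \Cref{l5} is vacuous at this point, since \Cref{l6} already excludes $4$-cycles.
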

\begin{proof} Suppose, to the contrary, that $G$ has two $5$-cycles sharing two consecutive edges. Let $C=v_1v_2v_3v_4v_7$ and $C'= v_1v_7v_4v_5v_6$ be these two $5$-cycles sharing two consecutive edges $v_1v_7$ and $v_7v_4$ (See Figure \ref{fig:double_graph}). 
\begin{itemize}
\item \textbf{Case 1:} $d(v_1)=3$ and $d(v_4)=4$. \\
Then, since $G$ is $4$-irregular, we have $d(v_3)= d(v_5)=d(v_7)=3$. Since $v_1$ has at most one $3$-neighbor by \Cref{l2} and $d(v_7)=3$, we have $d(v_6)=d(v_2)=4.$ 
\begin{itemize}
\item \textbf{Subcase 1:} $d(v_3,v_6)=3$. \\
By minimality of $|E(G)|$, we can color all the vertices of $G^2$ except $V(C) \cup V(C')$. Now, we have $|L(v_1)|\ge 6$, $|L(v_2)|\ge 4$, $|L(v_3)|\ge 4$, $|L(v_4)|\ge 5$, $|L(v_5)|\ge 4$, $|L(v_6)|\ge 4$, and $|L(v_7)|\ge 6 $. Since $d(v_3,v_6)=3$ and $|L(v_3)| +|L(v_6)|> 6$, we can color $v_3$ by $c_1$ and $v_6$ by $c_2$ such that $|L(v_1)\backslash \{c_1, c_2\}| \ge 5$. Color
$v_3$ by $c_1$ and $v_6$ by $c_2$ and call the new list $L_1$. We have $|L_1(v_1)|\ge 5$, $|L_1(v_2)|\ge 2$, $|L_1(v_4)|\ge 3$, $|L_1(v_5)|\ge 2$, and $|L_1(v_7)|\ge 4 $. Now, we greedily color in order $v_2$, $v_5$, $v_4$, $v_7$, and $v_1$ in $G^2$ to obtain $\chi_l(G^2) \leq 11$, a contradiction.
\item \textbf{Subcase 2:} $d(v_2,v_5)=3$. \\
By minimality of $|E(G)|$, we can color all the vertices of $G^2$ except $V(C) \cup V(C')$. Now, we have $|L(v_1)|\ge 6$, $|L(v_2)|\ge 4$, $|L(v_3)|\ge 4$, $|L(v_4)|\ge 5$, $|L(v_5)|\ge 4$, $|L(v_6)|\ge 4$, and $|L(v_7)|\ge 6 $. Since $d(v_2,v_5)=3$ and $|L(v_2)| +|L(v_5)|> 6$, we can color $v_2$ by $c_1$ and $v_5$ by $c_2$ such that $|L(v_1)\backslash \{c_1, c_2\}| \ge 5$. Color
$v_2$ by $c_1$ and $v_5$ by $c_2$ and call the new list $L_1$. We have $|L_1(v_1)|\ge 5$, $|L_1(v_3)|\ge 2$, $|L_1(v_4)|\ge 3$, $|L_1(v_6)|\ge 2$, and $|L_1(v_7)|\ge 4 $. Now, we greedily color in order $v_3$, $v_6$, $v_4$, $v_7$, and $v_1$ in $G^2$ to obtain $\chi_l(G^2) \leq 11$, a contradiction. 
\item \textbf{Subcase 3:} $d(v_3,v_6) \leq 2$ and $d(v_2,v_5) \leq 2$. \\
Since $g(G) >4 $ by \Cref{l6}, we have $d(v_3,v_6) = 2$ and 
$d(v_2,v_5) = 2$. Let $x_1 \in N(v_3) \cap N(v_6)$ and $x_2 \in N(v_2) \cap N(v_5)$. Since $G$ is $4$-irregular, we have $d(x_1)=d(x_2)=3$. Let $x_6 \in N(v_6) \backslash \{v_1, v_5,x_1\}$. Note that $x_6 \neq v_2$ and $x_6 \neq v_4$ since $g(G)>3$. Now, we color all the vertices of $G^2$ except $V(C) \cup V(C') \cup \{x_1, x_2,x_6\}$. We have $|L(v_1)|\ge 9$, $|L(v_2)|\ge 7$, $|L(v_3)|\ge 8$, $|L(v_4)|\ge 7$, $|L(v_5)|\ge 9$, $|L(v_6)|\ge 8$, $|L(v_7)|\ge 6 $, $|L(x_1)| \ge 7 $, $|L(x_2)| \ge 6$, and $|L(x_6)| \ge 3$. \\
Since $N(v_3)= \{x_1, v_2, v_4\}$, we deduce that $d(x_6, v_3) >1$. Moreover, if $x_6$ and $v_3$ share a common neighbor, we get a $3$-cycle $x_1v_6x_6$ or a $4$-cycle  $v_4v_5v_6x_6$ or $v_6x_6v_2v_1$, a contradiction since $g(G)>4$. Thus, we have $d(x_6,v_3) =3$. Since $d(v_3,x_6)=3$ and $|L(v_3)| +|L(v_6)|> 9$, we can color $v_3$ by $c_1$ and $v_6$ by $c_2$ such that $|L(v_1)\backslash \{c_1, c_2\}| \ge 8$. Color
$v_3$ by $c_1$ and $x_6$ by $c_2$ and call the new list $L_1$. Now, we have $|L_1(v_1)|\ge 8$, $|L_1(v_2)|\ge 6$, $|L_1(v_4)|\ge 6$, $|L_1(v_5)|\ge 7$, $|L_1(v_6)|\ge 6$, $|L_1(v_7)|\ge 5 $, $|L_1(x_1)| \ge 5 $, and $|L_1(x_2)| \ge 5$. Now, we greedily color in order $x_1$, $x_2$, $v_2$, $v_4$, $v_5$, $v_6$, $v_7$, and $v_1$ in $G^2$ to obtain $\chi_l(G^2) \leq 11$, a contradiction.
\end{itemize}
\item \textbf{Case 2:} $d(v_1)=3$ and $d(v_4)=3$. Since $G$ is 4-irregular and a $3$-vertex has at most one $3$-neighbor by \Cref{l2}, we deduce that $d(v_7)=4$. Moreover, exactly one of $v_2$ and $v_3$ is of degree $3$. Without loss of generality, suppose that $d(v_2)=3$ and $d(v_3)=4$. Thus, we deduce in this case that $d(v_6)=4$ and $d(v_5)=3$. 
\begin{itemize}
\item \textbf{Subcase 1:} $d(v_3,v_6)=3$. \\
By minimality of $|E(G)|$, we can color all the vertices of $G^2$ except $V(C) \cup V(C')$. Now, we have $|L(v_1)|\ge 6$, $|L(v_2)|\ge 5$, $|L(v_3)|\ge 4$, $|L(v_4)|\ge 6$, $|L(v_5)|\ge 5$, $|L(v_6)|\ge 4$, and $|L(v_7)|\ge 5 $. Since $d(v_3,v_6)=3$ and $|L(v_3)| +|L(v_6)|> 6$, we can color $v_3$ by $c_1$ and $v_6$ by $c_2$ such that $|L(v_1)\backslash \{c_1, c_2\}| \ge 5$. Color
$v_3$ by $c_1$ and $v_6$ by $c_2$ and call the new list $L_1$. We have $|L_1(v_1)|\ge 5$, $|L_1(v_2)|\ge 3$, $|L_1(v_4)|\ge 4$, $|L_1(v_5)|\ge 3$, and $|L_1(v_7)|\ge 3$. Now, we greedily color in order $v_7$, $v_5$, $v_2$, $v_4$, and $v_1$ in $G^2$ to obtain $\chi_l(G^2) \leq 11$, a contradiction. 
\item \textbf{Subcase 2:} $d(v_2,v_5)=3$. \\
By minimality of $|E(G)|$, we can color all the vertices of $G^2$ except $V(C) \cup V(C')$. Now, we have $|L(v_1)|\ge 6$, $|L(v_2)|\ge 5$, $|L(v_3)|\ge 4$, $|L(v_4)|\ge 6$, $|L(v_5)|\ge 5$, $|L(v_6)|\ge 4$, and $|L(v_7)|\ge 5 $. Since $d(v_2,v_5)=3$ and $|L(v_2)| +|L(v_5)|> 6$, we can color $v_2$ by $c_1$ and $v_5$ by $c_2$ such that $|L(v_1)\backslash \{c_1, c_2\}| \ge 5$. Color
$v_2$ by $c_1$ and $v_5$ by $c_2$ and call the new list $L_1$. We have $|L_1(v_1)|\ge 5$, $|L_1(v_3)|\ge 2$, $|L_1(v_4)|\ge 4$, $|L_1(v_6)|\ge 2$, and $|L_1(v_7)|\ge 3$. Now, we greedily color in order $v_3$, $v_6$, $v_7$, $v_4$, and $v_1$ in $G^2$ to obtain $\chi_l(G^2) \leq 11$, a contradiction.

\item \textbf{Subcase 3:} $d(v_3,v_6) \leq 2$ and $d(v_2,v_5) \leq 2$. \\
Since $g(G) >4 $ by \Cref{l6}, we have $d(v_3,v_6) = 2$ and 
$d(v_2,v_5) = 2$. Let $x_1 \in N(v_3) \cap N(v_6)$ and $x_2 \in N(v_2) \cap N(v_5)$. Since $G$ is $4$-irregular, we have $d(x_1)=d(x_2)=3$. Let $x_6 \in N(v_6) \backslash \{v_1, v_5,x_1\}$. Note that $x_6 \neq v_2$ and $x_6 \neq v_4$ since $g(G)>3$. Now, we color all the vertices of $G^2$ except $V(C) \cup V(C') \cup \{x_1, x_2,x_6\}$. We have $|L(v_1)|\ge 9$, $|L(v_2)|\ge 8$, $|L(v_3)|\ge 7$, $|L(v_4)|\ge 8$, $|L(v_5)|\ge 9$, $|L(v_6)|\ge 8$, $|L(v_7)|\ge 5$, $|L(x_1)| \ge 6 $, $|L(x_2)| \ge 5$, and $|L(x_6)| \ge 3$. \\
Since $N(v_3)= \{x_1, v_2, v_4\}$, we deduce that $d(x_6, v_3) >1$. Moreover, if $x_6$ and $v_3$ share a common neighbor, we get a $3$-cycle $x_1v_6x_6$ or a $4$-cycle  $v_4v_5v_6x_6$ or $v_6x_6v_2v_1$, a contradiction since $g(G)>4$. Thus, we have $d(x_6,v_3) =3$. Since $d(v_3,x_6)=3$ and $|L(v_3)| +|L(v_6)|> 9$, we can color $v_3$ by $c_1$ and $v_6$ by $c_2$ such that $|L(v_1)\backslash \{c_1, c_2\}| \ge 8$. Color
$v_3$ by $c_1$ and $x_6$ by $c_2$ and call the new list $L_1$. Now, we have $|L_1(v_1)|\ge 8$, $|L_1(v_2)|\ge 7$, $|L_1(v_4)|\ge 7$, $|L_1(v_5)|\ge 7$, $|L_1(v_6)|\ge 6$, $|L_1(v_7)|\ge 4 $, $|L_1(x_1)| \ge 4 $, and $|L_1(x_2)| \ge 4$. Now, we greedily color in order $x_1$, $x_2$, $v_7$, $v_6$, $v_5$, $v_4$, $v_2$, and $v_1$ in $G^2$ to obtain $\chi_l(G^2) \leq 11$, a contradiction.
\end{itemize}

\item \textbf{Case 3:} $d(v_1)=4$ and $d(v_4)=4$. \\
Since $G$ is $4$-irregular, we deduce that $d(v_2)=d(v_3)=d(v_5)=d(v_6)=d(v_7)=3$. \\
If $d(v_6,v_3)=1$, we get a $4$-cycle. So, $d(v_6,v_3)\ge 2$ and similarly $d(v_2,v_5)\ge 2$. We will show that $d(v_2,v_5)=d(v_3,v_6)=3$. 

Suppose first that $d(v_3,v_6)=2$. We distinguish two cases: (1) $d(v_2,v_5)=3$, and (2) $d(v_2,v_5)=2$.

Suppose that  $d(v_2,v_5)=3$. Let $x_1 \in N(v_3) \cap N(v_6)$. Now, we color all the vertices of $G^2$ except $V(C) \cup V(C') \cup \{x_1\}$. We have $|L(v_1)|\ge 6$, $|L(v_2)|\ge 6$, $|L(v_3)|\ge 7$, $|L(v_4)|\ge 6$, $|L(v_5)|\ge 6$, $|L(v_6)|\ge 7$, $|L(v_7)|\ge 5 $, and $|L(x_1)| \ge 5 $. 
Since $d(v_2,v_5)=3$ and $|L(v_2)| +|L(v_5)|> 7$, we can color $v_2$ by $c_1$ and $v_5$ by $c_2$ such that $|L(v_6)\backslash \{c_1, c_2\}| \ge 6$. Color
$v_2$ by $c_1$ and $v_5$ by $c_2$ and call the new list $L_1$. Now, we have $|L_1(v_1)|\ge 4$, $|L_1(v_3)|\ge 5$, $|L_1(v_4)|\ge 4$, $|L_1(v_6)|\ge 6$, $|L_1(v_7)|\ge 3 $, and $|L_1(x_1)| \ge 3$. Now, we greedily color in order $x_1$, $v_7$, $v_4$, $v_1$, $v_3$, and $v_6$ in $G^2$ to obtain $\chi_l(G^2) \leq 11$, a contradiction.

Suppose now that $d(v_2,v_5)=2$.
Let $x_1 \in N(v_3) \cap N(v_6)$ and $x_2 \in N(v_2) \cap N(v_5)$. Since $G$ is $4$-irregular, we have $d(x_1)=d(x_2)=3$. Let $y_1 \in N(v_1) \backslash \{v_1, v_6,v_7\}$. Note that $y_1 \notin \{x_1,x_2,v_3,v_4\}$ since $g(G)>4$. Now, we color all the vertices of $G^2$ except $V(C) \cup V(C') \cup \{x_1, x_2,y_1\}$. We have $|L(v_1)|\ge 8$, $|L(v_2)|\ge 9$, $|L(v_3)|\ge 8$, $|L(v_4)|\ge 7$, $|L(v_5)|\ge 8$, $|L(v_6)|\ge 9$, $|L(v_7)|\ge 6 $, $|L(x_1)| \ge 5 $, $|L(x_2)| \ge 5$, and $|L(y_1)| \ge 3$. \\
Since $N(v_5)= \{x_2, v_4, v_6\}$, we deduce that $d(y_1, v_5) >1$. Moreover, if $y_1$ and $v_5$ share a common neighbor, we get a $3$-cycle $y_1v_6x_6$ or a $4$-cycle  $y_1x_2v_2v_1$ or $y_1v_1v_7v_4$, a contradiction since $g(G)>4$. Thus, we have $d(y_1,v_5) =3$. Since $d(y_1,v_5)=3$ and $|L(y_1)| +|L(v_5)|> 9$, we can color $v_3$ by $c_1$ and $v_6$ by $c_2$ such that $|L(v_6)\backslash \{c_1, c_2\}| \ge 8$. Color
$v_5$ by $c_1$ and $y_1$ by $c_2$ and call the new list $L_1$. Now, we have $|L_1(v_1)|\ge 6$, $|L_1(v_2)|\ge 7$, $|L_1(v_3)|\ge 7$, $|L_1(v_4)|\ge 6$, $|L_1(v_6)|\ge 8$, $|L_1(v_7)|\ge 4 $, $|L_1(x_1)| \ge 4$, and $|L_1(x_2)| \ge 4$. Now, we greedily color in order $x_1$, $x_2$, $v_1$, $v_7$, $v_4$, $v_3$, $v_2$, and $v_6$ in $G^2$ to obtain $\chi_l(G^2) \leq 11$, a contradiction.

Thus, $d(v_6,v_3)=3$ and similarly $d(v_2,v_5)=3$. Color all the vertices of $G$ in $G^2$ except $V(C) \cup V(C')$. We have $|L(v_i)| \ge 5$ for all $ i \in \{1,2,3,4,5,6,7\}$.
\begin{itemize}
\item \textbf{Subcase 1:}  Either $L(v_2) \cap L(v_5) \neq \emptyset $ or $L(v_3) \cap L(v_6) \neq \emptyset$. \\
Without loss of generality, suppose that 
$L(v_2) \cap L(v_5) \neq \emptyset $. Color $v_2$ and $v_5$ by $c\in L(v_2) \cap L(v_5)$ and call the new list $L_1$. We have $|L_1(v_i)| \ge 4$ for all $ i \in \{1,3,4,6,7\}$. We greedily in order $v_1$, $v_6$, $v_7$, $v_4$, and $v_3$  in $G^2$ to obtain $\chi_l(G^2) \leq 11$, a contradiction.
\item \textbf{Subcase 2:} $L(v_2) \cap L(v_5)=L(v_3) \cap L(v_6) = \emptyset$.
Thus, $|L(v_2) \cup L(v_5)| \ge 10$. Then, we can color $v_2$ by $c_2$ and $v_5$ by $c_5$ such that $|L(v_3)\backslash \{c_2,c_5\}| \ge 4$ and $|L(v_6)\backslash \{c_2,c_5\}| \ge 4$. Color $v_2$ by $c_2$ and $v_5$ by $c_5$ and call the new list $L_1$. We have $|L_1(v_1)| \ge 3$, $|L_1(v_4)| \ge 3$, $|L_1(v_7)| \ge 3$, $|L_1(v_3)| \ge 4$, and $|L_1(v_6)| \ge 4$. Then, we greedily color in order $v_1$, $v_4$, $v_7$, $v_3$, and $v_6$ in $G^2$ to obtain $\chi_l(G^2) \leq 11$, a contradiction.
\end{itemize}
\end{itemize}
\end{proof}
\begin{figure}[h!]
\centering
\begin{tikzpicture}[scale=1.7, every node/.style={circle, fill=black, inner sep=1.8pt}]
  \node (v1) at (1.6,0) [label=right:$v_1$] {};
  \node (v2) at (1.6,1.5) [label=right:$v_2$] {};
  \node (v3) at (0,1.5) [label=left:$v_3$] {};
  \node (v4) at (0,0) [label=left:$v_4$] {};
  \node (v5) at (0.45,0.8) [label=above left:$v_5$] {};
  \node (v6) at (1.15,0.8) [label=above right:$v_6$] {};
  \node (v7) at (0.8,-0.9) [label=below:$v_7$] {};
  \draw (v1)--(v2)--(v3)--(v4)--(v7)--(v1);
  \draw (v4)--(v5)--(v6)--(v1);
\end{tikzpicture}
\hspace{1.5cm} 
\begin{tikzpicture}[scale=1.8, every node/.style={circle, fill=black, inner sep=1.8pt}]
  \node (v1) at (1.6,0) [label=right:$v_1$] {};
  \node (v2) at (1.6,1.5) [label=right:$v_2$] {};
  \node (v3) at (0,1.5) [label=left:$v_3$] {};
  \node (v4) at (0,0) [label=left:$v_4$] {};
  \node (v5) at (0.45,0.8) [label=above left:$v_5$] {};
  \node (v6) at (1.15,0.8) [label=above right:$v_6$] {};
  \node (v7) at (0.8,-0.9) [label=below:$v_7$] {};

  \node (x1) at (0.55,1.1) [label=above:$x_1$] {};
  \node (x2) at (1.0,1.2) [label=above:$x_2$] {};

  \node (x6) at (1.15,0.25) [label=right:$x_6$] {};

  \draw (v1)--(v2)--(v3)--(v4)--(v7)--(v1);
  \draw (v4)--(v5)--(v6)--(v1);
  \draw (x1)--(v3);
  \draw (x1)--(v6);
  \draw (x2)--(v2);
  \draw (x2)--(v5);
  \draw (x6)--(v6);
\end{tikzpicture}

\caption{Illustration of \Cref{l7}}
\label{fig:double_graph}
\end{figure} 
\begin{lemma} \label{l8} $g(G) \ge 6$. \end{lemma}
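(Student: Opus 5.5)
We argue by contradiction: since $g(G)\ge 5$ by \Cref{l6}, suppose $G$ contains a $5$-cycle $C=v_1v_2v_3v_4v_5$ and derive a contradiction in the same style as the earlier lemmas. The steps are: fix the degree pattern on $C$, record what the girth forbids among the outside neighbours, find a pair of vertices at distance $3$, and then color.

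\textbf{Degree pattern on $C$.} Since $4$-vertices are pairwise non-adjacent, $C$ contains at most two $4$-vertices. By \Cref{l1}, every other vertex of $C$ is a $3$-vertex. By \Cref{l2}, no three consecutive vertices of $C$ are $3$-vertices. Around a $5$-cycle these two facts force exactly two $4$-vertices on $C$. Up to symmetry, $d(v_1)=d(v_3)=4$ and $d(v_2)=d(v_4)=d(v_5)=3$.

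\textbf{Consequences of the girth.} Each $v_i$ has $d(v_i)-2$ neighbours off $C$. These are all distinct, because a common outside neighbour of two vertices of $C$ creates a cycle of length at most $4$, which \Cref{l6} forbids. No outside neighbour is adjacent to a vertex of $C$ other than its own. By \Cref{l7}, no outside vertex is adjacent to outside neighbours of two consecutive vertices of $C$; otherwise we get a second $5$-cycle sharing two consecutive edges with $C$. The $3$-neighbours satisfy further constraints: the outside neighbours of $v_4$ and $v_5$ are $4$-vertices by \Cref{l2}, since $v_4v_5$ is already a $3$-$3$ edge.

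\textbf{Coloring.} By minimality of $|E(G)|$, we color $G^2$ except $V(C)$. Each $v_i$ is adjacent in $G^2$ to the other four vertices of $C$, since every pair on a $5$-cycle is at distance at most $2$. It also has at most $10-4=6$ colored $G^2$-neighbours outside $C$. Counting these precisely:
\begin{itemize}
\item the $3$-vertices $v_2,v_4,v_5$ have $d_{G^2}$ at most $3+\sum(\deg-1)$ over their neighbours, and keep lists of size about $5$ to $6$;
\item the $4$-vertices $v_1,v_3$ keep lists of size at least $4$.
\end{itemize}
Uncolored vertices are pairwise adjacent in $G^2$, so greedy coloring needs lists of sizes $\ge1,\dots,\ge5$ in a suitable order. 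To gain the missing slack, we enlarge the uncolored set to include selected outside neighbours. For example, we uncolor $x_1\in N(v_1)\setminus C$, which raises the list sizes of $v_1,v_2,v_5$. We then find a vertex pair at distance $3$ in $G$, such as $x_1$ and $v_3$ or $v_4$; the girth and \Cref{l7} rule out paths of length at most $2$ between them. On this pair we save a color at a vertex of $C$, as in \Cref{l6}, and finish greedily in an order ending with the vertex having the largest list.

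\textbf{Main obstacle.} The hard step is producing enough slack. The naive count gives only about $4,4,5,5,5$ for the five vertices of the $5$-clique, which is insufficient without savings. I would first uncolor all outside neighbours of $v_4$ and $v_5$, which are $4$-vertices, and verify via \Cref{l7} that certain pairs among them and the cycle vertices are at distance $3$. If one of these distances is only $2$, the resulting short structure gives a new case, handled like Subcase 3 of \Cref{l7} by adding the common neighbour to the uncolored set. We may also use a Subcase 2 argument of the type ``disjoint lists give $|L\cup L'|\ge10$'' to save at two vertices simultaneously.
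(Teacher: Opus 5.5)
Your degree analysis is correct, and your framework (uncolor $V(C)$ plus some outside neighbors, use the girth and \Cref{l7} to get a pair at distance $3$, save a color on $C$, finish greedily) is the paper's. The gap is that the coloring step, which is the whole content of the lemma, is never carried out, and the numbers you give for it are wrong. In a $4$-irregular graph $d_{G^2}(v)$ can be $12$, not $10$. So with only $V(C)$ uncolored you get only $|L(v_1)|,|L(v_2)|,|L(v_3)|\ge 3$ and $|L(v_4)|,|L(v_5)|\ge 4$.

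Your sample choice does not close. Uncoloring only $x_1\in N(v_1)\setminus V(C)$ gives list sizes $4,4,3,4,5$ on $v_1,\dots,v_5$ and $2$ on $x_1$. One saving via $(x_1,v_3)$ or $(x_1,v_4)$ then leaves a $4$-clique of $G^2$ on $C$ whose guaranteed list sizes are at best $2,3,3,3$ or $2,2,2,4$, which is not enough. Uncoloring the outside neighbors of both $v_4$ and $v_5$ instead raises $|L(v_5)|$ to $6$. Then the natural pair (outside neighbor of $v_5$, $v_3$), whose list sizes sum to $6$, no longer saves a color at $v_5$, and you offer nothing concrete in its place.

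Also, \Cref{l7} does not give what you claim. A common neighbor of $x_i$ and $x_{i+1}$ yields a $5$-cycle sharing only the edge $v_iv_{i+1}$ with $C$. What \Cref{l7} excludes is an edge between an outside neighbor of $v_i$ and one of $v_{i+2}$; that is why your two concrete distance-$3$ claims happen to hold.

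The missing idea is to choose the uncolored set so that a single saving lands exactly on the one vertex that is short. In your labelling, the paper uncolors $V(C)$, the outside neighbor $y$ of $v_5$, and one outside neighbor $b$ of the $4$-vertex $v_3$. Here $y$ is a $4$-vertex by \Cref{l2}, so $|L(y)|\ge 2$. This gives $|L(v_1)|,|L(v_2)|,|L(v_3)|\ge 4$, $|L(v_4)|\ge 6$, $|L(v_5)|\ge 5$ and $|L(b)|\ge 2$.

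A common neighbor of $y$ and $v_3$ would be one of the following:
\begin{itemize}
\item $v_2$ or $v_4$, giving a cycle of length at most $4$;
\item an outside neighbor of $v_3$, giving a $5$-cycle sharing $v_3v_4,v_4v_5$ with $C$, which \Cref{l7} excludes.
\end{itemize}
Hence $d(y,v_3)=3$. Since $|L(y)|+|L(v_3)|\ge 6>5$, you can color $y$ and $v_3$ while removing only one color from $L(v_5)$. The remaining guaranteed sizes are then $1,2,3,4,4$ on $b,v_1,v_2,v_4,v_5$. Because $b$ is at distance $3$ from both $v_1$ and $v_5$, greedy coloring in this order finishes.
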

\begin{proof} Suppose that $G$ has a $5$-cycle $C=v_1v_2v_3v_4v_5$. Since $G$ is $4$-irregular and $C$ is odd, we deduce that there exists $i \in \{1,2,3,4\}$ such that $d(v_i)=d(v_{i+1})=3$ or $d(v_1)=d(v_5)=3$.  Without loss of generality, suppose that $d(v_1)=d(v_2)=3$. Then, we deduce that $d(v_3)=d(v_5)=4$ by \Cref{l2}. Since $G$ is $4$-irregular, we have $d(v_4)=3$.  Let $x_1 \in N(v_1)\backslash \{v_2,v_5\}$ and  $ x_3 \in N(v_3)\backslash \{v_2,v_4\}$. Now, we color all the vertices in $G^2$ except $V(C) \cup \{x_1,x_3\}$. We have $|L(v_1)| \ge 5$, $|L(v_2)| \ge 6$, $|L(v_3)| \ge 4$, $|L(v_4)| \ge 4$, $|L(v_5)| \ge 4$, $|L(x_1)| \ge 2$ and $|L(x_3)| \ge 2$. If $x_1=x_3$, we get a $4$-cycle in $G$ which contradicts \Cref{l6}. If $d(x_1,v_3)=2$, we get two $5$-cycles $C$ and $v_1v_2v_3x_3x_1$ sharing two consecutive edges which contradicts \Cref{l7}. Thus, we have $d(x_1,v_3)=3$. Since $|L(x_1)| + |L(v_3)| >5$, we can color $x_1$ by $c_1$ and $v_3$ by $c_3$ such that $|L(v_1)\backslash \{c_1,c_3\}|\ge 4$. Color $x_1$ by $c_1$ and $v_3$ by $c_3$ and call the new list $L_1$. We have $|L_1(v_1)| \ge 4$, $|L_1(v_2)| \ge 4$, $|L_1(v_4)| \ge 3$, $|L_1(v_5)| \ge 2$ and $|L_1(x_3)| \ge 1$. So, we  greedily color in order $x_3$, $v_5$, $v_4$, $v_2$, and $v_1$ in $G^2$ to obtain $\chi_l(G^2) \leq 11$, a contradiction. \end{proof}
\begin{lemma} \label{l9} $g(G) \ge 7$. \end{lemma}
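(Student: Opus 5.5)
We argue by contradiction: suppose $G$ contains a $6$-cycle $C=v_1v_2\cdots v_6$ (indices mod $6$). By \Cref{l8} we have $g(G)\ge 6$, so $C$ is a shortest cycle. This gives the geometry we need. $C$ has no chords. Two vertices of $C$ have a common neighbour only if they are at distance $2$ along $C$. Opposite vertices $v_i,v_{i+3}$ are at distance exactly $3$ in $G$: a path of length at most $2$ between them, together with one half of $C$, would close a cycle of length at most $5$. Hence $C$ induces in $G^2$ the octahedron $K_{2,2,2}$, whose three non-adjacent pairs are the antipodal pairs $\{v_i,v_{i+3}\}$.

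\textbf{Degree patterns.} By $4$-irregularity and \Cref{l2}, every run of consecutive $3$-vertices on $C$ has length at most $2$, and $4$-vertices are never consecutive. Up to symmetry, the degree sequence around $C$ is therefore one of the following:
\begin{itemize}
\item the alternating pattern $4,3,4,3,4,3$;
\item patterns containing one or two blocks $3,3$, for example $3,3,4,3,4,3$ or $3,3,4,3,3,4$.
\end{itemize}

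\textbf{List sizes.} By minimality of $|E(G)|$ we colour every vertex of $G^2$ except $V(C)$. The number of $G^2$-neighbours of $v_i$ is $d(v_i)+\sum_{u\in N(v_i)}(d(u)-1)$. By the geometry above, exactly four of these neighbours lie on $C$, and they are uncoloured. So
\[
|L(v_i)|\ge 15-d(v_i)-\sum_{u\in N(v_i)}(d(u)-1).
\]
In the alternating pattern every $v_i$ has exactly $12$ neighbours in $G^2$, so every list has size at least $3$. When a block $3,3$ is present, the corresponding vertices have at most $11$ neighbours in $G^2$, and hence lists of size at least $4$. In those cases I would colour greedily, starting with a vertex of smallest list and ending with the vertices with the largest lists. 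Where greedy is one colour short, I would first save a colour on a vertex $v_j$ by colouring the antipodal pair $v_{j+1},v_{j+4}$ together, using the saving technique described in the introduction (they are non-adjacent in $G^2$ and both adjacent to $v_j$). If some case is still tight, I would also uncolour a pendant neighbour of a $3$-vertex of $C$, as in \Cref{l6} and \Cref{l8}, to gain slack.

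\textbf{Main obstacle: the alternating pattern.} Here we must $L$-colour the octahedron with lists of size $3$. I would do this directly, which amounts to reproving the $3$-choosability of $K_{2,2,2}$.
\begin{itemize}
\item If some antipodal pair, say $v_1,v_4$, has a common colour $c$, colour both with $c$. The remaining vertices $v_2,v_3,v_5,v_6$ induce in $G^2$ a $4$-cycle $v_2v_3v_5v_6$ (the pairs $v_2v_5$ and $v_3v_6$ are the non-edges). Each of these lists loses at most one colour, so has size at least $2$. Since even cycles are $2$-choosable, we finish.
\item Otherwise all three antipodal pairs have disjoint lists. Then $|L(v_1)\cup L(v_4)|=6$, so some colour of $L(v_1)\cup L(v_4)$ lies outside $L(v_2)$. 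Colour $v_1$ or $v_4$ with such a colour and the other member of the pair arbitrarily; this removes at most one colour from $L(v_2)$. Then colour the pair $\{v_3,v_6\}$ in the same way, saving a colour on $v_5$. Finally $v_2$ and $v_5$ each keep at least $3-1-2+1\ge 1$ colour, and they are non-adjacent in $G^2$, so both can be coloured.
\end{itemize}
The accounting in this last step (which colours hit which lists) is where I expect the care to be needed. If the simple count fails, I would instead choose colours for $v_1$ and $v_3$ that avoid $L(v_5)$ where possible, using the disjointness of $L(v_2)$ and $L(v_5)$.

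In every case $G^2$ is $L$-colourable, contradicting the choice of $G$, so $g(G)\ge 7$.
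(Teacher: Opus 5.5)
\textbf{The gap is in the last bullet of your octahedron argument, where all three antipodal pairs have disjoint lists.} The count ``$3-1-2+1\ge 1$'' has no justification for the final $+1$.

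At the end, $v_2$ is adjacent in $G^2$ to all four of $v_1,v_3,v_4,v_6$. These four receive pairwise distinct colours: $v_1v_3v_4v_6$ is a $4$-cycle in $G^2$, and the antipodal lists are disjoint. Your procedure guarantees only that one of these colours, from the pair $\{v_1,v_4\}$, avoids $L(v_2)$. The pair $\{v_3,v_6\}$ is chosen to save a colour on $v_5$, not on $v_2$. So $v_2$ can be left with $3-1-2=0$ colours, and symmetrically $v_5$ with $3-2-1=0$.

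Concretely, take $L(v_1)=\{d,g,h\}$, $L(v_4)=\{a,i,j\}$, $L(v_2)=\{a,b,c\}$, $L(v_5)=\{d,e,f\}$, $L(v_3)=\{a,b,d\}$, $L(v_6)=\{c,e,f\}$. Your rules allow the following choices:
\begin{itemize}
\item $v_1=d$, since $d\notin L(v_2)$, and $v_4=a$ ``arbitrarily'';
\item then $v_6=c$, since $c\notin\{e,f\}$, the remaining list of $v_5$, and $v_3=b$, which is forced.
\end{itemize}
After this, $v_2$ has no colour left. Your fallback (``avoid $L(v_5)$ where possible'') is not an argument.

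\textbf{How to repair it.} Use Hall's theorem. When antipodal lists are disjoint, a proper $L$-colouring of $K_{2,2,2}$ is exactly a system of distinct representatives of the six lists. Hall's condition holds: any at most $3$ lists have a union of size at least $3$. Any $4$ or more vertices contain an antipodal pair, whose two lists alone already give $6$ colours. This is shorter than the paper's hands-on Case 2 analysis. Your first bullet (a common colour on an antipodal pair, then $2$-choosability of $C_4$) is correct and matches the paper's Case 1 in spirit.

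\textbf{The degree-pattern part is flawed and also unnecessary.}
\begin{itemize}
\item Read cyclically, $3,3,4,3,4,3$ contains the run $v_6v_1v_2$ of three $3$-vertices, which \Cref{l2} excludes. The only non-alternating pattern is $4,3,3,4,3,3$.
\item Those cases are handled only by a vague plan (greedy, saving, uncolouring pendant neighbours), which is not a proof.
\end{itemize}
You do not need any of this. In a $4$-irregular graph every vertex has at most $12$ neighbours in $G^2$: a $4$-vertex has only $3$-neighbours, and a $3$-vertex has neighbours of degree at most $4$. So your own bound gives $|L(v_i)|\ge 3$ for every $v_i$, whatever the pattern. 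This is exactly how the paper proceeds: uniform lists of size $3$, then $3$-choosability of the octahedron. Once the last bullet is fixed as above, your argument covers every case at once.
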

\begin{proof} Suppose that $G$ has a $6$-cycle $C=v_1v_2v_3v_4v_5v_6$. By minimality of $|E(G)|$, we can color all the vertices of $G$ but $V(C)$. Then, $|L(v_i)| \ge 3$ $\forall i, 1 \leq i \leq 6$. Since $g(G) >5$ by \Cref{l8}, we have $d(v_i,v_{i+3})=3$ for $i=1,2,3$. 
\begin{itemize}
\item \textbf{Case 1:} There exists $i \in \{1,2,3\}$ such that $L(v_i) \cap L(v_{i+3}) \neq \emptyset $. \\
Without loss of generality, suppose that $L(v_1) \cap L(v_{4}) \neq \emptyset $. Color $v_1$ and $v_4$ by $c \in L(v_1)\cap L(v_4)$
and call the new list $L_1$. We have $|L_1(v_i)|\ge 2$ for $i= 2,3,5,6$. 

Suppose that $L_1(v_3) \cap L_1(v_6) \neq \emptyset $. Then, we color $v_3$ and $v_6$ by $c' \in L_1(v_3)\cap L_1(v_6) $ in $G^2$ and call the new list $L_2$. Now, we have $|L_2(v_2)| \ge 1$ and $|L_2(v_5)| \ge 1$. Since $d(v_2,v_5)=3$, we can greedily color $v_2$ and $v_5$ independently by an available color in $G^2$ to obtain $\chi_l(G^2) \leq 11$, a contradiction. 

Suppose now that $L_1(v_3) \cap L_1(v_6) = \emptyset $. Now, we color $v_2$ by $c_1\in L_1(v_2)$ in $G^2$. Since $L_1(v_3) \cap L_1(v_6) = \emptyset $, either $|L_1(v_3)\backslash \{c_1\}| \ge 2$ or $|L_1(v_6)\backslash \{c_1\}| \ge 2$. If $|L_1(v_3)\backslash \{c_1\}| \ge 2$, we greedily color in order $v_6$, $v_5$ and $v_3$ in $G^2$ to obtain $\chi_l(G^2) \leq 11$, a contradiction. If 
$|L_1(v_6)\backslash \{c_1\}| \ge 2$, we greedily color in order $v_3$, $v_5$ and $v_6$ in $G^2$ to obtain $\chi_l(G^2) \leq 11$, a contradiction.
\item \textbf{Case 2:} We have  $L(v_1) \cap L(v_4) = L(v_2) \cap L(v_5) = L(v_3) \cap L(v_6) = \emptyset $. \\
If $L(v_i)=L(v_{i+1})$ for all $ i \in \{1,2,3,4,5\}$, then we have $L(v_i) \cap L(v_{i+3}) \neq \emptyset $ for $i=1,2,3$, a contradiction. Thus, there exists $i \in \{1,2,3,4,5\}$ such that $L(v_i) \neq L(v_{i+1})$. Without loss of generality, suppose that $L(v_1) \neq L(v_2)$. Then, either there exists $c \in L(v_1)\backslash L(v_2)$ or $ c \in L(v_2)\backslash L(v_1)$. Without loss of generality, suppose that  there exists $ c \in L(v_1)\backslash L(v_2)$. Color $v_1$ by $c$ and call the new list $L_1$. We have $|L_1(v_2)|\ge 3$, $|L(v_3)|\ge 2$, $|L_1(v_4)|\ge 3$ and  $|L_1(v_5)| \ge 2$, $|L_1(v_6)|\ge 2$. Since $L(v_3) \cap L(v_6) =  \emptyset$, either $c \notin L(v_3)$ or $c \notin L(v_6)$. 
\begin{itemize}
\item \textbf{Subcase 1:} $c \notin L(v_3)$. \\ 
Then, $|L_1(v_3)|\ge 3$ and  $|L_1(v_6)| \ge 2$.

We will show that $L_1(v_6)\subset L_1(v_2)$ and $L_1(v_4)\subset L_1(v_2)$. 

Suppose that $L_1(v_6)\not\subset L_1(v_2)$. Then, there exists $c_6\in L_1(v_6)\backslash L_1(v_2)$.  Color $v_6$ by $c_6$ and call the new list $L_2$. So, we have $|L_2(v_2)|\ge 3$, $|L_2(v_3)|\ge 3$,  $|L_2(v_4)| \ge 2$, and $|L_2(v_5)| \ge 1$. Greedily color in order $v_5$, $v_4$, $v_3$,
and $v_2$ in $G^2$ to obtain $\chi_l(G^2) \leq 11$, a contradiction. So,  $L_1(v_6) \subset L_1(v_2)$. 

Suppose that  $L_1(v_4)\not \subset L_1(v_2)$. Then, there exists $c_4\in L_1(v_4)\backslash L_1(v_2)$. Color $v_4$ by $c_4$ and call the new list $L_2$. Note that $c_4 \notin L_1(v_6)$ since $L_1(v_6) \subset L_1(v_2)$. So, we have $|L_2(v_2)|\ge 3$, $|L_2(v_3)|\ge 2$,  $|L_2(v_5)| \ge 1$, and $|L_2(v_6)| \ge 2$. Greedily color in order $v_5$, $v_6$, $v_3$,
and $v_2$ in $G^2$ to obtain $\chi_l(G^2) \leq 11$, a contradiction. So,  $L_1(v_4) \subset L_1(v_2)$. 

Since $|L_1(v_3)|+|L_1(v_6)|>3$, $L_1(v_3)\cap L_1(v_6)=\emptyset$, and $L_1(v_6) \subset L_1(v_1)$, we deduce that there exists $c_3\in L_1(v_3)$ such that $|L_1(v_2)\backslash \{c_3\}|\ge 3$. Since $L_1(v_4) \subset L_1(v_2)$, we have $|L_1(v_4)\backslash \{c_3\}|\ge 3$. Color $v_3$ by $c_3$ then greedily color in order $v_5$, $v_6$, $v_4$,
and $v_2$ in $G^2$ to obtain $\chi_l(G^2) \leq 11$, a contradiction.
\item \textbf{Subcase 2:} $c \notin L(v_6)$. \\
Then, $|L_1(v_6)|\ge 3$ and  $|L_1(v_3)| \ge 2$. 

We will show that $ L_1(v_6)\neq L_1(v_2)$.

Suppose that $ L_1(v_6)= L_1(v_2)$. Then, since $L_1(v_3) \cap L_1(v_6)= \emptyset$, we deduce that $L_1(v_2) \cap L_1(v_3)= \emptyset$. Color $v_3$ by $c_3$
and call the new list $L_2$. So, $c_3 \notin L_1(v_2)$. Thus, we have $|L_2(v_2)|\ge 3$, $|L_2(v_4)|\ge 2$,  $|L_2(v_5)| \ge 1$ and $|L_2(v_6)| \ge 3$. Thus, we can greedily color in order $v_5$, $v_4$, $v_6$, and $v_2$ in $G^2$ to obtain $\chi_l(G^2) \leq 11$, a contradiction. Hence, $ L_1(v_6) \neq L_1(v_2)$. 

Greedily color in order $v_5$, $v_3$ and $v_4$ in $G^2$ and call the new list $L_2$. Since $ L_1(v_6) \neq L_1(v_2)$, either $|L_2(v_2)|\ge 2$ and $|L_2(v_6)|\ge 1$, or $|L_2(v_6)|\ge 2$ and $|L_2(v_1)|\ge 1$, or $|L_2(v_2)|=|L_2(v_6)|= 1$ but $L_2(v_1) \neq L_2(v_6)$. Thus, in all cases we can extend the coloring to the whole graph of $G^2$ to obtain $\chi_l(G^2) \leq 11$, a contradiction.
\end{itemize}
\end{itemize}
\end{proof}
\begin{lemma} \label{l10} Let $C$ be the shortest cycle in $G$. Then, for any $v_i,v_j \in V(C)$, we have $d(v_i,v_j) \ge 3 $ when $j \notin \{i-2,i-1,i,i+1,i+2\}$. \end{lemma}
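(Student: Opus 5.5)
The plan is a pure girth argument. It rests on \Cref{l9}, which gives $g(G)\ge 7$, together with the minimality of $C$; no list-coloring is needed. Write $C=v_1v_2\cdots v_g$ with $g=|V(C)|\ge 7$ and indices taken modulo $g$. For $v_i,v_j\in V(C)$, let $k$ denote their distance along $C$, that is, the length of the shorter of the two arcs of $C$ joining them. The hypothesis $j\notin\{i-2,\dots,i+2\}$ says exactly that $3\le k\le \lfloor g/2\rfloor$. Suppose, for a contradiction, that $d(v_i,v_j)\le 2$, and fix a shortest $v_iv_j$-path $P$ in $G$. Let $A$ be the shorter arc of $C$ from $v_i$ to $v_j$, so $|A|=k$. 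Since $|P|\le 2<3\le k$, the path $P$ is not contained in $C$ as an arc. The aim is to produce a cycle of length less than $g$.

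\textbf{Case $|P|=1$.} Then $v_iv_j$ is a chord of $C$. Together with $A$ it forms a cycle of length $k+1\le g/2+1<g$, which contradicts the choice of $C$ as a shortest cycle.

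\textbf{Case $|P|=2$, $P=v_iwv_j$.} First suppose $w\notin V(C)$. Then $P\cup A$ is a genuine cycle, because its only vertices shared with $C$ are $v_i$ and $v_j$. Its length is $k+2\le g/2+2$, and this is strictly less than $g$ because $g>4$. This contradicts minimality. Now suppose $w\in V(C)$. A shortest cycle has no chord, since a chord would split it into two shorter cycles. Hence both edges $v_iw$ and $wv_j$ must be edges of $C$. But then $v_i$ and $v_j$ are at distance $2$ along $C$, so $k\le 2$, which contradicts $k\ge 3$.

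In every case we reach a contradiction, so $d(v_i,v_j)\ge 3$. The only point needing care is the subcase where the middle vertex $w$ of $P$ lies on $C$, because then $P\cup A$ need not be a cycle. The chordlessness of a shortest cycle handles this subcase. The inequality $k+2<g$ is comfortable here, since \Cref{l9} gives $g\ge 7$, far more than the required $g>4$.
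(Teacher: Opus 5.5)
Your proof is correct and follows the same route as the paper: a $v_iv_j$-path of length at most $2$, combined with an arc of $C$, yields a cycle shorter than $C$. Your version is more careful than the paper's in two respects. You take the shorter arc so the length bound $k+2<g$ is actually verified. You also handle the case where the middle vertex $w$ lies on $C$ using chordlessness, which the paper's proof skips.
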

\begin{proof} Let $C$ be the shortest cycle in $G$. Set $C= v_1v_2...v_k$. By \Cref{l9}, $k \ge 7$. Suppose that there exists two vertices $v_i$ and $v_j$ in $C$ such that $j \notin \{i-2,i-1,i,i+1,i+2\}$ and $d(v_i,v_j) \leq 2$. Suppose that $d(v_i,v_j)=1$. Then, either  $v_iv_{i+1}\dots v_j$ or $v_iv_{i-1}\dots v_j$ is a cycle $C'$ with $l(C')<l(C)$, a contradiction. Suppose now that $d(v_i,v_j)=2$. Let $x\in N(v_i)\cap N(v_j)$. Then, either  $v_iv_{i+1}\dots v_jx$ or $v_iv_{i-1}\dots v_jx$ is a cycle $C'$ with $l(C')<l(C)$, a contradiction.
\end{proof}

Now, we are ready to prove that $\chi_l(G^2) \leq 11$, a contradiction. \\

\subsection{Eliminating the Minimal Counterexample}
Now, we will study a shortest cycle in $G$ and analyze the interaction between the available  color lists on the cycle and its external neighbors. This structural information allows a sequence of recoloring arguments leading to the desired contradiction. 
\\

Let $C = v_1v_2v_3...v_k$ be the shortest cycle in $G$. Then, $C$ has no chords and $k \ge 7$ since $g(G) \ge 7$ by \Cref{l9}. Moreover, since $C$ is the shortest cycle, then we have $d(v_i,v_j) \ge 3 $ for $j \notin \{i-2,i-1,i+1,i+2\}$ by \Cref{l10}. Let $x_i \in N(v_i) \backslash V(C)$. 
Color all the vertices in $G^2$ except $V(C) \cup \{x_i\; ;i \in \mathbb{N}\}$.
We have $|L(v_i)| \ge 6$ and $|L(x_i)| \ge 2$ for all $i\in \{1,\dots ,k\}$. Note that $d(x_i,v_{i+2})=3$, $d(x_i,x_{i+1})=3$, and $d(x_i,x_{i+2})\ge 3$ since $g(G) \ge 7$.
\begin{claim} \label{c1} $L(x_i) \subset L(v_i)$ for all $i$, $1\leq i \leq k$. 
\end{claim}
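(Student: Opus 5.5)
We argue by contradiction: suppose some color $c\in L(x_i)\setminus L(v_i)$ exists, and show the partial coloring extends to all of $G^2$. Such a contradiction is all the claim requires. The idea is that coloring $x_i$ with $c$ colors a $G^2$-neighbor of $v_i$ at no cost to $v_i$. This gives $v_i$ one unit of slack, and the plan is to spend it at the one place where a greedy coloring around $C$ would otherwise get stuck.

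\textbf{Step 1 (external vertices).} Color $x_i$ by $c$. Then color the remaining $x_j$ ($j\neq i$) greedily. The relations $d(x_j,x_{j+1})=3$ and $d(x_j,x_{j+2})\ge 3$ (from $g(G)\ge 7$), together with \Cref{l10}, show that the $x_j$ are pairwise non-adjacent in $G^2$. Hence each $x_j$ still has its $\ge 2$ colors when we reach it. Each $v_j$ is adjacent in $G^2$ to at most the three external vertices $x_{j-1},x_j,x_{j+1}$, so afterwards $|L(v_j)|\ge 3$ for all $j$. For $v_i$ the color of $x_i$ is not in $L(v_i)$, so $|L(v_i)|\ge 4$. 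If some $v_j$ has several outside neighbors, the initial bound $|L(v_j)|\ge 6$ already accounts for them, and the same counting applies.

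\textbf{Step 2 (the cycle).} Only $C^2$ remains. Since $C$ is chordless and \Cref{l10} holds, this is exactly the square of a $k$-cycle with $k\ge 7$: each $v_j$ is adjacent to $v_{j\pm1},v_{j\pm2}$ and to nothing else on $C$. A plain greedy coloring going around the cycle and ending at $v_i$ fails by one color at $v_{i-1}$, which sees three already colored vertices. To fix this we save a color at $v_{i-1}$ using the saving device from the introduction. The vertices $v_{i+1}$ and $v_{i-2}$ are both $G^2$-neighbors of $v_{i-1}$, and they are at distance $3$ in $G$ because $k\ge 7$. Since $|L(v_{i+1})|+|L(v_{i-2})|\ge 6>|L(v_{i-1})|$ in the tight case, we can color them so that $v_{i-1}$ loses only one color. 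Then we color the path $v_{i+2},v_{i+3},\dots,v_{i-3}$ in order, and finish with $v_{i-1}$ and then $v_i$, using the extra color $v_i$ gained in Step 1.

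\textbf{Main obstacle.} The delicate part is the bookkeeping in Step 2. The saving at $v_{i-1}$ also puts an early colored vertex next to $v_{i-3}$ (via $v_{i-2}$) and next to $v_i$ (via both saved vertices). So one must check that $v_{i-3}$, $v_{i-1}$ and $v_i$ each keep strictly more colors than their already colored $C^2$-neighbors at the moment they are colored. If the naive order fails at $v_{i-3}$, I would switch to one of two alternatives. The first is to perform a second saving at $v_{i-3}$ using the pair $v_{i-1},v_{i-5}$, which is again non-adjacent in $G^2$ because $k\ge 7$. The second is to color $v_{i-3}$ immediately after the saving step and before the rest of the path. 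The case analysis should then mirror Case 2 of \Cref{l9}, where lists of size $3$ on a cycle square were handled by comparing $L(v_j)$ with $L(v_{j+3})$.
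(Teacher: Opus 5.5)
\textbf{The gap is in Step 2, and it is not a bookkeeping issue.} After you color every $x_j$ arbitrarily, you are left with list-coloring $C_k^2$ from lists of size $3$, with one list of size $4$ at $v_i$. This is not always possible.

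Take $k\equiv 2\pmod 3$ (e.g.\ $k=8$), $L(v_j)=\{1,2,3\}$ for $j\neq i$ and $L(v_i)=\{1,2,3,4\}$. Color $4$ is available only at $v_i$, so colors $1,2,3$ must cover $k-1$ vertices. But a color class of $C_k^2$ consists of vertices at pairwise cyclic distance at least $3$, so it has at most $\lfloor k/3\rfloor$ vertices, and $3\lfloor k/3\rfloor=k-2<k-1$.

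These lists genuinely arise from your Step 1. Suppose that after the precoloring $L(v_j)=\{1,2,3,a_{j-1},a_j,a_{j+1}\}$ for $j\neq i$ and $L(v_i)=\{1,2,3,4,a_{i-1},a_{i+1}\}$, where the $a_j$ are distinct new colors, $a_j\in L(x_j)$, and $c=a_i$. Then your greedy step may give each $x_j$ the color $a_j$.

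So neither a second saving inside the cycle nor a reordering can rescue Step 2. Your naive order in fact already breaks at $v_{i-4}$, not just $v_{i-3}$: its neighbors $v_{i-6},v_{i-5},v_{i-2}$ are all colored when it is reached. The analogy with the $6$-cycle lemma is misleading. There the cycle square is $K_{2,2,2}$, which is $3$-choosable, whereas $C_k^2$ with $k\not\equiv 0\pmod 3$ is not even $3$-colorable.

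\textbf{The missing idea} is that the freedom at the external vertices must be used in the savings rather than spent up front. The paper keeps $x_{i-1}$ (and later $x_{i-2}$) uncolored and saves at $v_{i-1}$ with the pair $v_{i+1},x_{i-1}$, which are at distance $3$. Either they can receive a common color, or one of them can receive a color outside $L_1(v_{i-1})$. Since $x_{i-1}$ is off the cycle, this saving puts no extra colored neighbor on $v_{i-3}$ or $v_{i-4}$. The remaining $x_j$ and then $v_{i+2},\dots,v_{i-2}$ can therefore be colored greedily, since each still has two uncolored $G^2$-neighbors at its turn. The coloring ends with $v_{i-1}$ and $v_i$. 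The paper's subcases, including a further saving with the pair $v_{i+1},x_{i-2}$, only serve to prevent these last two vertices from being left with the same single color.

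A smaller inaccuracy: the $x_j$ need not be pairwise non-adjacent in $G^2$. For $k=7$, $x_j$ and $x_{j+3}$ may have a common neighbor, closing a $7$-cycle. Your count in Step 1 still survives, because each $x_j$ keeps $v_{j-1},v_j,v_{j+1}$ uncolored.
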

\begin{proof} Suppose, to the contrary, that there exists $c \in L(x_i) \backslash L(v_i)$ for some $i$. Color $x_i$ by $c$ and call the new list $L_1$. So, $|L_1(v_i)|\ge 6$, $|L_1(v_{i+1})|\ge 5$, and $|L_1(v_{i-1})|\ge 5$. 

Note that $d(v_{i+1},x_{i-1})\ge 3$ since $g(G)>6$.
We will show that $L_1(v_{i+1}) \cap L_1(x_{i-1})=\emptyset$. 

Suppose, to the contrary, that $L_1(v_{i+1}) \cap L_1(x_{i-1}) \neq \emptyset$. Then, we can color $v_{i+1}$ and $x_{i-1}$ by $c' \in L_1(v_{i+1}) \cap L_1(x_{i-1}) $. Since each of the uncolored vertices in $V(C)\cup \{x_i,i \in \mathbb{N}\}$ has at least two uncolored neighbors in $G^2$, we can greedily color in order $x_{i+1}$, $x_{i+2}$,..., $x_{i-2}$, $v_{i+2}$, $v_{i+3}$, ..., and $v_{i-2}$ in $G^2$. Call the new list $L_2$. We have $|L_2(v_i)| \ge 2$ and $|L_2(v_{i-1})| \ge 1$. Greedily color $v_{i-1}$ then $v_i$ in $G^2$ to obtain $\chi_l(G^2) \leq 11$, a contradiction. 

Thus, we deduce that $L_1(v_{i+1}) \cap L_1(x_{i-1})=\emptyset$. Since $|L_1(v_{i+1}) \cup L_1(x_{i-1})| >5$, we can either color $v_{i+1}$ by some color $c_1$ or $x_{i-1}$ by some color $c_2$
such that $|L_1(v_{i-1})\backslash \{c_1\}| \ge 5$ or 
$|L_1(v_{i-1})\backslash \{c_2\}| \ge 5$. 
\begin{itemize}
\item \textbf{Case 1:} $|L_1(v_{i-1})\backslash \{c_1\}| \ge 5$. \\
Color $v_{i+1}$ by $c_1$ and call the new list $L_2$.

\begin{itemize}
\item \textbf{Subcase 1:} Suppose that for any choice of $c_1$, we have $|L_1(v_1)\backslash \{c_1\}|=5$. 

We will show that either $L_2(v_i)\neq L_2(v_{i-1})$ or we can color $x_{i-1}$ by some color $c_3$ such that $|L_2(v_i)\backslash \{c_3\}|= 5$. 

Suppose that $L_2(v_i)= L_2(v_{i-1})$. Then, $c_1$ is unique. So, for any $c_1'\in L_1(v_{i+1})\backslash \{c_1\}$, we have $c_1'\in L_1(v_i)$. Thus, since $L_1(v_{i+1})\cap L_1(x_{i-1})=\emptyset$ and $|L_1(v_{i+1})| +|L_1(x_{i-1})|>6$, we deduce that we can color $x_{i-1}$ by some color $c_3$ such that $|L_2(v_i)\backslash \{c_3\}|\ge 5$.

Assume first that $L_2(v_i)\neq L_2(v_{i-1})$. Now greedily color in order $x_{i+1}$, $x_{i+2}$, ..., $x_{i-1}$, $v_{i+2}$, $v_{i+3}$,... $v_{i-2}$ in $G^2$ which is possible since each vertex has at least two uncolored neighbors in $G^2$. Call this list $L_3$. Since $L_2(v_i)\neq L_2(v_{i-1})$, we deduce that either $|L_3(v_i)| \ge 2$ and $|L_3(v_{i-1})| \ge 1$, or  $|L_3(v_i)| \ge 1$ and $|L_3(v_{i-1})| \ge 2$, or $|L_3(v_i)|=|L_3(v_{i-1})|= 1$ but $L_3(v_i)\neq L_3(v_{i-1})$. Thus, we can greedily color $v_{i+1}$ and $v_i$ in both cases in $G^2$ to obtain $\chi_l(G^2) \leq 11$, a contradiction. 

Assume now that we can color $x_{i-1}$ by some color $c_3$ such that $|L_2(v_i)\backslash \{c_3\}|= 5$. Color $x_{i-1}$ by $c_3$ then greedily color in order $x_{i+1}$, $x_{i+2}$, ..., $x_{i-1}$, $v_{i+1}$, $v_{i+3}$,... $v_{i-2}$ in $G^2$ which is possible since each vertex has at least two uncolored neighbors in $G^2$. Call this list $L_3$. So, we have $|L_3(v_i)| \ge 2$ and $|L_3(v_{i-1})| \ge 1$. Thus, we can greedily color $v_{i+1}$ then $v_i$ in $G^2$ to obtain $\chi_l(G^2) \leq 11$, a contradiction.
\item \textbf{Subcase 2:} Suppose that we can choose $c_1$ such that $|L_1(v_1)\backslash \{c_1\}|\ge 6$.  Greedily color $x_{i+1}$, $x_{i+2}$, ..., $x_{i-1}$, $v_{i+2}$, $v_{i+3}$,... $v_{i-2}$ in order in $G^2$. Call this list $L_3$. We have $|L_3(v_i)| \ge 2$ and $|L_3(v_{i-1})| \ge 1$. Greedily color $v_{i+1}$ then $v_i$ in $G^2$ to obtain $\chi_l(G^2) \leq 11$, a contradiction. 
\end{itemize}
\item \textbf{Case 2:} $|L_1(v_{i-1})\backslash \{c_2\}| \ge 6$. \\
Color $x_{i-1}$ by $c_2$ and call the new list $L_2$.  
Note that $d(v_{i+1},x_{i-2})\ge 3$ since $g(G)>6$. Since $|L_2(v_{i+1})|+|L_2(x_{i-2})|>5$, we can color $v_{i+1}$ by $c_3$ and $x_{i-2}$ by $c_4$ such that $|L_2(v_{i-1})\backslash \{c_3,c_4\}|\ge 4$. Color  $v_{i+1}$ by $c_3$ and $x_{i-2}$ by $c_4$ then greedily color in order $x_{i+1}$, $x_{i+2}$,..., $x_{i-3}$, $v_{i+2}$, $v_{i+3}$,..., and $v_{i-2}$ and call the new list $L_3$. So, we have $|L_3(v_i)|\ge 1$ and $|L_3(v_{i-1})|\ge 2$. Greedily color $v_i$ then $v_{i-1}$ in $G^2$ to obtain $\chi_l(G^2)\leq 11$, a contradiction.
\end{itemize}
\end{proof}
\begin{claim} \label{c2} $L(x_i)\cap L(x_{i+1}) = \emptyset$ for all $i$. \end{claim}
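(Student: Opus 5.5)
We argue by contradiction, following the template of \Cref{c1}. Suppose that $L(x_i)\cap L(x_{i+1})\neq\emptyset$ for some $i$, and pick $c\in L(x_i)\cap L(x_{i+1})$. Since $d(x_i,x_{i+1})=3$ (because $g(G)\ge 7$), $x_i$ and $x_{i+1}$ are not adjacent in $G^2$, so we may color both of them with $c$. By \Cref{c1} we have $c\in L(v_i)$ and $c\in L(v_{i+1})$. Hence $v_i$ and $v_{i+1}$, the only vertices of $C$ that see both $x_i$ and $x_{i+1}$ in $G^2$, each lose just one color from two colored $G^2$-neighbours. This is the ``saving'' we need. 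Every other vertex of $C\cup\{x_j\}$ loses at most one color, since at most one of $x_i,x_{i+1}$ is within distance $2$ of it. Call the new list $L_1$. Then $|L_1(v_i)|,|L_1(v_{i+1})|\ge 5$, $|L_1(v_{i-1})|,|L_1(v_{i+2})|\ge 5$, and $|L_1(x_j)|\ge 2$ for $j\ne i,i+1$ (or at least $1$ when $x_j$ is close to $x_i$ or $x_{i+1}$).

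Next we color the remaining vertices greedily, in an order that ends at $v_i$ and $v_{i+1}$. Concretely, we first color $x_{i+2},x_{i+3},\dots,x_{i-1}$. Each of these still has at least two uncolored $G^2$-neighbours on $C$ at the moment it is colored, so an available color always exists (the second remark of the paper). We then color $v_{i+2},v_{i+3},\dots,v_{i-1}$ around the cycle. At each step the current vertex still has at least two uncolored $G^2$-neighbours, namely vertices further along the sequence together with $v_i$ and $v_{i+1}$, so greedy coloring succeeds.

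At the end only $v_i$ and $v_{i+1}$ remain uncolored, and we count their lists. In $G^2$ the neighbourhood of $v_i$ within the uncolored set $V(C)\cup\{x_j\}$ is $\{v_{i-2},v_{i-1},v_{i+1},v_{i+2},x_{i-1},x_i,x_{i+1}\}$; any further neighbours are already colored. That set has $7$ vertices, but $x_i$ and $x_{i+1}$ share one color. Excluding $v_{i+1}$, at most $5$ distinct colors are removed, so $|L(v_i)|\ge 6$ leaves at least one color. Symmetrically, $v_{i+1}$ retains at least one color. This alone is not enough to finish, since both might be left with the same single color. This tight endgame is the main obstacle, and the handling of it is as in \Cref{c1} and \Cref{l9}.

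We would try two devices. The first is to save one more color at $v_i$: use the lower bound $|L(x_{i-1})|+|L(v_{i+2})|$ together with $d(x_{i-1},v_{i+2})\ge 3$, and apply the saving argument described in the introduction, choosing their colors so that $v_i$ loses only one color overall. This leaves $|L(v_i)|\ge 2$ at the end, and we color $v_{i+1}$ then $v_i$. The second device covers the case where the saving fails, which forces containments of the form $L(x_{i-1})\cup L(v_{i+2})\subseteq L(v_i)$ with disjointness. In that case we argue, as in Subcase 1 of \Cref{c1}, that either $L(v_i)\neq L(v_{i+1})$ after the partial coloring, so two lists of size $1$ differ and the coloring can be completed, or an alternative choice of color for $x_{i-1}$ gives the extra room needed. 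In all cases $G^2$ is $L$-colorable, which is a contradiction.
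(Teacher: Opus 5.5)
Your first device is exactly the paper's proof: color $x_i,x_{i+1}$ with the common color $c$, then color $v_{i+2}$ and $x_{i-1}$ (at distance at least $3$) so that $v_i$ loses only one more color, greedily color the rest, and finish with $v_{i+1}$ and then $v_i$. The second device is superfluous, because the saving can never fail: if $|L_1(v_i)|\ge 6$ no saving is needed, and if $|L_1(v_i)|=5$ then $|L_1(x_{i-1})|+|L_1(v_{i+2})|\ge 2+5>5$ (your ``failure'' configuration, with disjoint lists whose union lies in $L_1(v_i)$, would itself force $|L_1(v_i)|\ge 7$), so you should state this inequality, commit to the first device, and drop the vaguely sketched fallback.
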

\begin{proof} Assume that $L(x_i)\cap L(x_{i+1}) \neq \emptyset$. Recall that $d(x_i,x_{i+1})\ge 3$ since $g(G)\ge 7$. Color $x_i$ and $x_{i+1}$ by $c \in L(x_i)\cap L(x_{i+1})$ and call the new list $L_1$. So, $|L_1(v_i)|\ge 5$, $|L_1(v_{i+1})|\ge 5$, $|L_1(v_{i+2})|\ge 5$, and $|L_1(x_{i-1})|\ge 2$. Since $g(G) > 6$, we have $d(v_{i+2}, x_{i-1})\ge 3$. Since $|L_1(v_{i+2})| + |L_1(x_{i-1})| >5$ and $|L_1(v_i)| \ge 5$, we can color 
$v_{i+2}$ by $c_1$ and $x_{i-1}$ by $c_2$ such that $|L_1(v_i) \backslash \{c_1,c_2\}| \ge 4$. Color $v_{i+2}$ by $c_1$ and $x_{i-1}$ by $c_2$ then greedily color $x_{i+2}$, ..., $x_{i-2}$, $v_{i+3}$, $v_{i+4}$,..., and $v_{i-1}$ in order in $G^2$ which is possible since each vertex has at least two uncolored neighbors in $G^2$. Call this list $L_2$. We have $|L_2(v_i)| \ge 2$ and $|L_2(v_{i+1})| \ge 1$. Greedily color $v_{i+1}$ then $v_i$ in $G^2$ to obtain $\chi_l(G^2) \leq 11$, a contradiction.
\end{proof}
\begin{claim} \label{c3} $L(x_{i-1})\cap L(x_{i+1}) = \emptyset$ for all $i$.
\end{claim}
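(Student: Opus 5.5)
Suppose, to the contrary, that there is a color $c \in L(x_{i-1})\cap L(x_{i+1})$ for some $i$. The plan mirrors the proof of \Cref{c2}: use $c$ to save a color at $v_i$, then spend the slack on a second saving move, and finish by a greedy ordering that leaves two adjacent vertices of $C$ last.

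\textbf{Step 1: color $x_{i-1}$ and $x_{i+1}$ with $c$.} Since $g(G)\ge 7$, we have $d(x_{i-1},x_{i+1})\ge 3$; otherwise $x_{i-1}v_{i-1}v_iv_{i+1}x_{i+1}$ together with a path of length at most $2$ would close a cycle of length at most $6$. Hence this coloring is proper in $G^2$. Call the new list $L_1$. By \Cref{c1} we have $c\in L(v_{i-1})\cap L(v_{i+1})$, but only one color is removed at each common $G^2$-neighbor. In particular $|L_1(v_i)|\ge 5$ and $|L_1(v_{i\pm1})|\ge 5$. By \Cref{c2}, $c\notin L(x_i)$, so $|L_1(x_i)|\ge 2$.

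\textbf{Step 2: a second saving move at $v_i$.} We look for two vertices at distance at least $3$ from each other that are both $G^2$-neighbors of $v_i$. The natural choice is $v_{i+2}$ and $x_{i-1}$, but $x_{i-1}$ is already colored, so we use $v_{i-2}$ and $v_{i+2}$ instead. These satisfy $d(v_{i-2},v_{i+2})\ge 3$ by \Cref{l10}, since $k\ge 7$. Their lists have size at least $5$ each (each lost only $c$), so $|L_1(v_{i-2})|+|L_1(v_{i+2})|>|L_1(v_i)|$. By the saving argument we can color them so that $v_i$ loses only one color, giving $|L_2(v_i)|\ge 4$.

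\textbf{Step 3: greedy completion.} Greedily color in order $x_{i+2},\dots,x_{i-2}$, then $x_i$, then $v_{i+3},\dots,v_{i-3}$. Each of these still has at least two uncolored $G^2$-neighbors among $v_{i-1},v_i,v_{i+1}$ or the remaining vertices at the time it is colored, so it has an available color. Then finish with $v_{i+1}$, $v_{i-1}$, and $v_i$.

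\textbf{Main obstacle: the final count.} We must verify that $v_i$ ends with at least $3$ colors and $v_{i\pm1}$ with at least $2$ and $1$ colors, respectively. The uncolored $G^2$-neighbors of $v_i$ at the end are $v_{i-1}$ and $v_{i+1}$. Its colored neighbors $x_i$, $v_{i\pm2}$, $x_{i\pm1}$ remove at most $1+1+1=3$ colors from the original $6$, which leaves $3$. That suffices, provided $v_{i+1}$ is left with a color at the moment it is colored. If the bookkeeping at $v_{i+1}$ fails, the fallback is to exploit $L(x_i)\subset L(v_i)$ from \Cref{c1}. In that case we color $x_i$ early, before $v_{i\pm1}$, since its color is already counted in $v_i$'s losses. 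We then recount at $v_{i+1}$, whose uncolored neighbors at the end are $v_i$ and $v_{i-1}$ only if we order $v_{i+1}$ before $v_{i-1}$, and we symmetrize as needed.
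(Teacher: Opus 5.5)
Your argument does not close: it is one saving short, and Step 3 also breaks earlier. Define the \emph{slack} of an uncolored vertex as $|L(\cdot)|$ minus its number of uncolored $G^2$-neighbors. Initially every $v_j$ ($\ge 6$ colors, $7$ uncolored $G^2$-neighbors) and every $x_j$ ($\ge 2$ colors, $3$ uncolored neighbors) has slack $-1$, and slack only rises through a saving.

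Both of your moves save only at $v_i$. The vertices $x_{i+1}$ and $v_{i+2}$ are $G^2$-neighbors of $v_{i+1}$, but $x_{i-1}$ and $v_{i-2}$ are not; the situation at $v_{i-1}$ is symmetric. Moreover, by \Cref{c1} the color $c$ really lies in $L(v_{i\pm1})$, so it is removed there. Hence at the final $G^2$-triangle you only have $|L(v_{i+1})|\ge 1$, $|L(v_{i-1})|\ge 1$ and $|L(v_i)|\ge 3$. Nothing excludes $L(v_{i+1})=L(v_{i-1})=\{a\}$. For a $G^2$-triangle colored last, the slacks must be at least $-1,0,1$ in the order of coloring. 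So besides your two savings at $v_i$, you need one saving at $v_{i+1}$ or $v_{i-1}$.

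Your fallback does not supply that saving. \Cref{c1} says that coloring $x_i$ \emph{does} cost $v_i$ a color. Also, $x_i$ is a $G^2$-neighbor of all three of $v_{i-1},v_i,v_{i+1}$, so moving it earlier saves nothing.

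Step 3 fails even before the triangle. Because $v_{i-2}$ was colored in Step 2, your sweep $v_{i+3},\dots,v_{i-3}$ ends at vertices with a single uncolored $G^2$-neighbor and no saving. By \Cref{l10}, $v_{i-3}$ sees only $v_{i-1}$, and for $k\ge 8$ already $v_{i-4}$ sees only $v_{i-3}$. Their lists may therefore be empty, contrary to your claim that every vertex in Step 3 still has two uncolored $G^2$-neighbors.

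The missing ingredient is a saving at $v_{i+1}$. It should come from a pair of vertices at distance at least $3$ that are both $G^2$-neighbors of $v_{i+1}$. The greedy should then sweep the cycle so that it ends with just the pair $v_{i+1},v_i$, and every earlier vertex still sees two uncolored $G^2$-neighbors when it is reached.

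The paper does this as follows. After coloring $x_{i\pm1}$ by $c$, it first tries a common color on $v_{i+2}$ and $x_i$. These are at distance $3$ and are $G^2$-neighbors of both $v_i$ and $v_{i+1}$, so a common color saves at both at once. If $L_1(v_{i+2})\cap L_1(x_i)=\emptyset$, one of them gets a color that saves at $v_i$. Then $\{x_i,v_{i+3}\}$ or $\{x_{i+2},v_{i-1}\}$ is used to save at $v_{i+1}$. This leaves $|L(v_{i+1})|\ge 1$ and $|L(v_i)|\ge 2$ for the last two vertices.
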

\begin{proof} Suppose, to the contrary, that $L(x_{i-1})\cap L(x_{i+1}) \neq \emptyset$. Note that $d(x_{i-1},x_{i+1})\ge 3$ since $g(G)>6$. Color $x_{i-1}$ and $x_{i+1}$ by $c \in L(x_{i-1})\cap L(x_{i+1})$ and call the new list $L_1$. Since $g(G) > 6$, we have $d(v_{i+2}, x_i)\ge 3$. 

We will show that $L_1(v_{i+2}) \cap L_1(x_i) = \emptyset$.

Suppose that $L_1(v_{i+2}) \cap L_1(x_i) \neq \emptyset $. Color $v_{i+2}$ and $x_i$ by $c \in L_1(v_{i+2}) \cap L_1(x_i)$. Now, greedily color in order $x_{i+2}$, $x_{i+3}$, ..., $x_{i-2}$, $v_{i+3}$, $v_{i+4}$,..., and $v_{i-1}$ in $G^2$ which is possible since each vertex has at least two uncolored neighbors in $G^2$. Call the new list $L_2$. We have $|L_2(v_i)| \ge 2$ and $|L_2(v_{i+1})| \ge 1$. Greedily color $v_{i+1}$ then $v_i$ in $G^2$ to obtain $\chi_l(G^2) \leq 11$, a contradiction. Then, we have $L_1(v_{i+2}) \cap L_1(x_i) = \emptyset$. 

Since $|L_1(v_{i+2})| + |L_1(x_i)| > 5$ and $d(v_{i+2},x_i)=3$, either there exists $ c_1 \in L_1(v_{i+2})$ such that $|L_1(v_{i}) \backslash \{c_1\}| \ge 5$ or there exists  $ c_2 \in L_1(x_{i})$  such that $|L_1(v_{i}) \backslash \{c_2\}| \ge 5$. \begin{itemize}
\item \textbf{Case 1:} There exists $ c_1 \in L_1(v_{i+2})$  such that $|L_1(v_{i}) \backslash \{c_1\}| \ge 5$. Then, color $v_{i+2}$
by $c_1$ and greedily color $x_{i+2}$ and $x_{i+3}$ and call the new list $L_2$. Since $g(G) >6$, we have $d(x_i, v_{i+3}) \ge 3$. We have $|L_2(x_i)|\ge 2$, $|L_2(v_{i+1})|\ge 3$, and $|L_2(v_{i+3})|\ge 3$. Since $|L_2(x_i)| + |L_2(v_{i+3})| > 4 $ and $|L_2(v_{i+1}) |\ge 4$, we can color $x_i$ by $c_3$ and $v_{i+3}$ by $c_4$ such that $|L_2(v_{i+1}) \backslash \{c_3,c_4\}|\ge 2$. Color  $x_i$ by $c_3$ and $v_{i+3}$ by $c_4$ then greedily color in order $x_{i+4}$, 
$x_{i+5}$, ..., $x_{i-2}$, $v_{i+4}$, $v_{i+5}$,..., and $v_{i-1}$ in $G^2$. Call this list $L_3$. We have $|L_3(v_i)| \ge 2$ and $|L_3(v_{i+1})| \ge 1$. Greedily color $v_{i+1}$ then $v_i$ in $G^2$ to obtain $\chi_l(G^2) \leq 11$, a contradiction. 
\item \textbf{Case 2:} There exists $c_2 \in L_1(x_i)$  such that $|L_1(v_{i}) \backslash \{c_2\}| \ge 5$. \\
Then, color $x_i$
by $c_2$ and call the new list $L_2$.
Since $g(G) >6$, we have $d(x_{i+2}, v_{i-1}) \ge 3$. We have $|L_2(v_{i+1})|\ge 4$, $|L_2(v_{i-1})|\ge 4$, and $L_2(x_{i+2})|\ge2 $.
Since $|L_2(x_{i+2})| + |L_2(v_{i-1})| > 5 $, we can color $x_{i+2}$ by $c_3$ and $v_{i-1}$ by $c_4$ such that $|L_2(v_{i+1}) \backslash \{c_3,c_4\}|\ge 3$. Color $x_{i+2}$ by $c_3$ and $v_{i-1}$ by $c_4$ then greedily color $x_{i+2}$, $x_{i+3}$, ..., $x_{i-2}$, $v_{i-2}$, $v_{i-3}$,..., and $v_{i+2}$ which is possible since each vertex has at least two uncolored neighbors in $G^2$. Call this list $L_3$. We have $|L_3(v_i)| \ge 2$ and $|L_3(v_{i+1})| \ge 1$. Greedily color $v_{i+1}$ then $v_i$ in $G^2$ to obtain $\chi_l(G^2) \leq 11$, a contradiction. 
\end{itemize}
\end{proof}

\textbf{Final Step:} Recall that $|L(x_i)|\ge 2$ for all $i$, $1\leq i \leq k$. Let $c_i, c_i' \in L(x_i)$ such that $c_i\neq c_{i}'$. By \Cref{c1}, we deduce that $c_i, c_i' \in L(v_i)$. Color each $x_i$ by $c_i$ and each $v_i$ by $c_i'$. Recall that $d(v_i,v_j) \ge 3 $ for $j \notin \{i-2,i-1,i+1,i+2\}$ by \Cref{l10}. Since
$L(x_i)\cap L(x_{i+1}) =L(x_{i-1})\cap L(x_{i+1})= \emptyset$ by \Cref{c2} and \ref{c3}, we deduce that $c_i \neq c_j$
for $j \in \{i-2, i-1, i+1 , i+2\}$. Thus, the coloring in $G^2$ is proper
and so we deduce that $\chi_l(G^2) \leq 11$, a contradiction. Hence, $G$ does not exist. Therefore, for every $4$-irregular graph $G$, we have $\chi_l(G^2)\leq 11$.

\section{Open Problems}
\begin{problem}
    Let $G$ be a graph such that $\Delta(G)\leq 4$ and each $4$-vertex has at most one $4$-neighbor. Is $\chi_l(G^2)\leq 11?$
\end{problem}
\begin{problem}
    Let $G$ be a graph such that $\Delta(G)\leq 4$ and each $4$-vertex has at most $k$ $4$-neighbors, $k\in \{2,3\}$. Is $\chi_l(G^2)\leq 10+k?$
\end{problem}

\vspace{0.5cm}
\textbf{Acknowledgment:} I would like to express my heartfelt appreciation to Dr. Maidoun Mortada for her continuous support and guidance in completing this work.
\end{large}


\begin{thebibliography} {99}
\bibitem{1} G. Agnarsson, M. Halldórsson, Coloring powers of planar graphs, 
\textit{SIAM Journal on Discrete Mathematics} 16(4) (2003), 651-662.
\bibitem{6} K. Aoki, Improved 2-distance coloring of planar graphs with maximum degree five, \textit{Discrete Mathematics} 348(1) (2025), 114225.
\bibitem{2} B. Borodin, H. Glebov, Stars and Bunches in Planar Graphs. Part II: General Planar Graphs and Colourings, University of Twente, 2002,
2002-05.
\bibitem{3} N. Bousquet, W. Deschamps, L. De Meyer, T. Pierron, Improved square coloring of
planar graphs, \textit{Discrete Mathematics} 346(4) (2023), 113288.
\bibitem{4} M. Chen, L. Miao, S. Zhou, 2-Distance coloring of planar graphs
with maximum degree 5, \textit{Discrete Mathematics} 345 (2022), 112766.
\bibitem{ck} D. W. Cranston and S.-J. Kim, List-coloring the Square of a Subcubic Graph, \textit{Journal of Graph Theory}, 57(1) (2008), 65-87.
\bibitem{cs} D. W. Cranston and R. \v{S}krekovski, Sufficient sparseness conditions for $G^2$ to be $(\Delta+1)$-choosable, when $\Delta \ge 5$. \textit{Discrete Appl. Math.,} 162 (2014),167-176.
\bibitem{5} Z. Deniz, On 2-distance 16-coloring of planar graphs with maximum degree at most five, \textit{Discrete Mathematics} 348(4) (2025), 114379.
\bibitem{ds} M.H. Dolama and E. Sopena, On the maximum average degree and the incidence chromatic number of a graph, \textit{Discrete Math. and Theoret. Comput. Sci.}, 7(1) (2005), 203-216.
\bibitem{jkk} L. Jin, Y. Kang, and S.-J. Kim, The square of every subcubic planar graph without 4-cycles and 5-cycles is 7-choosable, \textit{Graphs and Combinatorics}, 42, (2026).
\bibitem{klno} S.-J. Kim, X. Lian, A. Nakamoto, and K. Ozeki, The square of a subcubic planar graph without a 5-cycle is 7-choosable, (2025) arXiv.2512.24536.  
\bibitem{kl} S.-J. Kim and X. Lian, The square of every subcubic planar graph of girth at least
6 is 7-choosable, \textit{ Discrete Mathematics}, 347(6) (2024),113963.
\bibitem{7} M. Krzyzinski, P. Rzazewski, and S. Tur. Coloring squares of planar graphs with small maximum degree, 2021, arXiv:2105.11235v1.

\bibitem{8} M. Molloy, M.R. Salavatipour, A bound on the chromatic number
of the square of a planar graph, \textit{Journal of Combinatorial Theory, Series B} 94(2) (2005), 189-213.
\bibitem{9} C. Thomassen, The square of a planar cubic graph is 7-colorable, \textit{Journal of Combinatorial Theory, Series B}, 128 (2018), 192–218.
\bibitem{10} J. Van den Heuvel, S. Mcguinness, Coloring the square of a planar graph, \textit{Journal of Graph Theory} 42(2) (2003), 42-110.
\bibitem{11} G. Wegner, Graphs with given diameter and a coloring problem, University of Dortmund, 1977.
\bibitem{12} J. Zhu, Y. Bu, Minimum 2-distance coloring of planar graphs and
channel assignment, \textit{Journal of Combinatorial Optimization} 36(1) (2018), 55-64.
\bibitem{13} J. Zou, M. Han, H. Lai, Square coloring of planar graphs
with maximum degree at most five, \textit{Discrete Applied Mathematics} 353 (2024), 4-11.

\end{thebibliography}
\end{document}